\newtheorem{theorem}{Theorem}[section]
\newtheorem{proposition}[theorem]{Proposition}
\theoremstyle{definition}
\theoremstyle{definition}
\theoremstyle{remark}
\newtheorem{remark}[theorem]{Remark}
\newtheorem{example}[theorem]{Example}
\newcommand{\mb}[1]{\ensuremath{\mathbf{#1}}}
\newcommand{\NN}{\mathbb N}
\newcommand{\PP}{\mathbb P}
\newcommand{\QQ}{\mathbb Q}
\newcommand{\RR}{\mathbb R}
\newcommand{\ZZ}{\mathbb Z}
\newcommand{\floor}[1]{\left\lfloor#1\right\rfloor}
\author[J. Ovall]{Jeffrey S. Ovall} \address{Jeffrey S. Ovall,
  Fariborz Maseeh Department of Mathematics and Statistics, Portland
  State University, Portland, OR 97201}
\email{jovall@pdx.edu}
\author[S. Reynolds]{Samuel E. Reynolds} \address{Samuel Reynolds, Fariborz Maseeh Department of Mathematics and Statistics, Portland
  State University, Portland, OR 97201}
\email{ser6@pdx.edu}
\begin{document}
\title[Quadrature on Curvilinear Polygons]{
		Quadrature for Implicitly-defined Finite Element Functions on Curvilinear Polygons
	} 
\date{\today}

\begin{abstract}
  $H^1$-conforming Galerkin methods on polygonal meshes such as VEM,
  BEM-FEM and Trefftz-FEM employ local finite element functions that
  are implicitly defined as solutions of Poisson problems having
  polynomial source and boundary data.  Recently, such methods have
  been extended to allow for mesh cells that are curvilinear polygons.
  Such extensions present new challenges for determining suitable
  quadratures.  We describe an approach for integrating products of
  these implicitly defined functions, as well as products of their
  gradients, that reduces integrals on cells to integrals along their
  boundaries.  Numerical experiments illustrate the practical
  performance of the proposed methods.
\end{abstract}

\maketitle

\section{Introduction}\label{Intro}

The construction, analysis and implementation of finite element
methods employing meshes consisting of non-standard cell shapes
(e.g. fairly general polygons in 2D and polyhedra in 3D) have
generated a lot of interest, and a sizable literature, in the last 10+
years.  We do not attempt here to provide a representative sample of
the literature, but instead highlight three closely-related approaches
for second-order (linear) elliptic problems that yield
$H^1$-conforming finite element spaces, and provide motivation for the
problems considered in this work.  Virtual Element Methods (VEM)
(cf. \cite{BeiraodaVeiga2013,Ahmad2013,Brezzi2014,BeiraodaVeiga2014,BeiraodaVeiga2014a,Antonietti2014,Gain2014,BeiraodaVeiga2016,Benedetto2016,Antonietti2017,Antonietti2019,BeiraodaVeiga2019,BeiraodaVeiga2020}),
Boundary Element-Based Finite Element Methods (BEM-FEM)
(cf.~\cite{Weisser2011,Rjasanow2012,Rjasanow2014,Weisser2014,Hofreither2016,Weisser2017,Weisser2018,Weisser2019,Weisser2019a,Seibel2020})
and Trefftz-type Finite Element Methods (Trefftz-FEM)
(cf. \cite{Copeland2009,Hofreither2010,Hofreither2011,Anand2018,Anand2020})
all employ vector spaces that are implicitly defined, as
in~\eqref{PoissonSpace}.  The primary difference between VEM on the
one hand, and BEM-FEM and Trefftz-FEM on the other, is how they work
with such implicitly defined spaces in practice, particularly with
regard to forming finite element linear systems.  In VEM, the space is
treated ``virtually'' via degrees of freedom that provide enough
information, up to a well-chosen stabilization term, to form the
linear system; computations with basis functions (shape functions) are
avoided.  In contrast, BEM-FEM and Trefftz-FEM work more directly with
basis functions, which are defined implicitly in terms of Poisson
problems with explicitly given (polynomial) data.  All computations
involving these basis functions (e.g. pointwise evaluation of functions and some of
their derivatives) are carried out using the solution of associated
boundary integral equations. 
BEM-FEM and Trefftz-FEM
differ primarily in the types of boundary integral equations that are
solved (typically second-kind for Trefftz-FEM, and first-kind for
BEM-FEM), and the discretizations employed for solving them (Nystr\"om
methods for recent versions of Trefftz-FEM, and BEM for BEM-FEM).
Recently, Trefftz-FEM and VEM have been extended to allow for mesh
cells that are curvilinear polygons
(cf.\cite{Anand2018,BeiraodaVeiga2019,Anand2020,BeiraodaVeiga2020}).

Determining suitable quadratures for finite elements on general
polytopal meshes is clearly more challenging than for standard meshes,
such as those involving only simplices and/or basic (affine)
transformations of tensor product cells, for which polynomial-based
quadratures of high order are readily available.  Allowing for general
curvilinear polygons and non-polynomial functions further complicates
the matter.  An obvious approach to quadrature on polytopes is to
first partition it into such standard mesh cells, apply the known
quadratures on each, and sum the result.  This ``brute-force''
approach, though costly, remains popular because of the simplicity of
its implementation.  It is used, for example, in the BEM-FEM
literature, whenever higher-order spaces, e.g.~\eqref{PoissonSpace}
for $m>1$, are employed. Making no attempt at being exhaustive, we
briefly describe some of the more sophisticated approaches---the
introduction in~\cite{Antonietti2018} provides a good starting point
for a more detailed exploration.  In~\cite{Sommariva2007}, the authors
describe an approach yielding Gauss-like quadratures for polygons,
which are exact for polynomials of a given degree $2n-1$.  An
unattractive feature of the approach is that it often leads to
quadrature points that lie outside the polygon.  However, in many
cases, including all convex polygons, a modified version of their
approach yields all quadrature points in the polygon.
In~\cite{Sudhakar2014}, the authors propose a similar approach which
also allows for integration on polyhedra, and provide a more careful
reporting of its practical efficiency.
In~\cite{Natarajan2009}, the authors first use a Schwarz-Christoffel
(conformal) mapping to transform a polygon to the unit disk, which is
clearly tensorial in polar coordinates, and then any number of basic
1D quadratures (e.g. midpoint rule) may be applied in the radial and
angular directions.  The authors discuss numerical methods for
determining the conformal map for each polygon.  A more sophisticated
subpartitioning approach is described in~\cite{Talischi2014}, with the
aim of integrating non-polynomial (often rational) functions such as
several variants of ``generalized barycentric coordinates'' naturally
arising in Polygonal FEM (PFEM) (cf.~\cite{Floater2015,Hormann2018}).
The contribution~\cite{Sudhakar2017} describes methods for systematically
``compressing''  pre-existing quadrature rules, keeping a subset of the
original nodes and recomputing quadrature weights, in order to retain
as much of the effectiveness of the original quadrature while often
drastically reducing the number of quadrature nodes.
In~\cite{Antonietti2018}, the authors consider the integration of
polynomials on (flat-faced) polytopes in $\RR^d$.  A simple identity
for homogeneous functions allows recursive reduction of the integral
of a polynomial on the polytope to a sum of integrals
lower-dimensional facets, ending in evaluation at the vertices.  The
first step in their reduction can also be used for curvilinear
polygons, so we provide further details in
Section~\ref{ReductionToBoundary}.  A very recent
contribution~\cite{Artioli2020} considers quadratures on curvilinear
polygons whose curved edges 
are arcs of circles, and whose vertices are also the vertices of a
convex polygon.  This approach first partitions the cell into (curved)
triangles and rectangles, each having at most once curved edge,
generates quadratures for these specialized shapes, and then sums them
to obtain a quadrature for the entire cell.  Finally, a compression
technique is used to reduce the number of quadrature points.

Given $m\in\ZZ$, we denote by $\PP_m$ the vector space of
(real-valued) polynomials of total degree $\leq m$ on $\RR^2$, with
the convention that $\PP_m=\{0\}$ when $m<0$.  We recall that
$\dim\PP_m=\binom{m+2}{2}$.  For non-empty $S\subset \RR^2$, we define
$\PP_m(S)$ as the restriction of $\PP_m$ to $S$, and note that it is
also a vector space.  The dimension of $\PP_m(S)$ depends on the
nature of $S$.  For example, if $S$ is open and connected, then
$\dim\PP_m(S)=\dim\PP_m=\binom{m+2}{2}$; if $S$ consists of a
single point, then $\dim\PP_m(S)=1$;  if $S$ is a straight line or a
segment thereof, then $\dim\PP_m(S)=\binom{m+2}{2}-\binom{m+1}{2}
=m+1$; and if $S$ is an arc of
a curved conic section,
then $\dim\PP_m(S)=\binom{m+2}{2}-\binom{m}{2}$.
Further discussion of $\PP_m(e)$, where $e$ is a simple (bounded)
curve, is given in~\cite{Anand2020}.

Let $K \subset \RR^2$ be open, bounded, simply connected, whose
Lipschitz boundary $\partial K$ is a union of smooth arcs with
disjoint interiors, which we call edges.  We refer to $K$ as a
curvilinear polygon.  Points where adjacent edges meet are called
vertices, and adjacent edges are allowed to meet at a straight angle.
For $m\in\NN$, we define $\PP_m^*(\partial K)$ to
be the vector space of continuous functions on $\partial K$ such that
the restriction of such a function to an edge $e$ of $K$ is in
$\PP_m(e)$.  It is clear that $\PP_m(\partial K)\subset \PP_m^*(\partial K)$.
We define the space $V_m(K)$ as
\begin{align}\label{PoissonSpace}
V_m(K)=\{v\in H^1(K):\, \Delta v\in\PP_{m-2}(K)\mbox{ in }K\,,\, v_{\vert_{\partial
  K}}\in\PP_m^*(\partial K)\}~.
\end{align}
It is apparent from the definition that $\PP_m(K)\subset V_m(K)$, and
it can be shown that the only way in which equality is
achieved is when $m=1$ and $K$ is a triangle.   A natural
decomposition of $V_m(K)$ is $V_m(K)=V_m^{\partial K}(K)\oplus
V_m^{K}(K)$, where
\begin{subequations}
\begin{align}\label{PoissonSpaceSplitting}
  V ^{\partial K}_m(K)&=\{v\in H^1(K):\, \Delta v=0\mbox{ in }K\,,\, v_{\vert_{\partial
  K}}\in\PP_m^*(\partial K)\}~,\\
 V ^{K}_m(K)&=\{v\in H^1(K):\, \Delta v\in\PP_{m-2}(K)\mbox{ in }K\,,\, v_{\vert_{\partial
  K}}=0\}~.
\end{align}
\end{subequations}
We see that $\dim V ^{K}_m(K)=\dim \PP_{m-2}(K)=\binom{m}{2}$, and
that $\dim V ^{\partial K}_m(K)=\dim \PP_m^*(\partial K)$, the latter
of which not only depends on $m$, but also on the number and nature of
the edges of $K$.

We consider methods for efficiently approximating integrals of the forms
\begin{align}\label{TargetIntegrals}
\int_K vw\,dx\quad,\quad \int_K\nabla  v\cdot\nabla w\,dx~,
\end{align}
for $v,w\in V_m(K)$, by reducing the computations to integrals along
the boundary $\partial K$.  As we will see in subsequent sections, we
will never need to evaluate functions or their gradients in the
interior of $K$ in order to evaluate the integrals in~\eqref{TargetIntegrals} for
$v,w\in V_m(K)$.  We will only need access to Dirichlet and Neumann
traces of associated functions.


The rest of the paper is organized as follows.  In
Section~\ref{Preliminaries}, we discuss algebraic and computational
techniques for determining functions whose Laplacian is either a
polynomial or a harmonic function.  Using these, we describe how the
integrals~\eqref{TargetIntegrals} can be reduced to associated
integrals on cell boundaries.  Quadratures along the edges require
evaluation of associated functions and their normal derivatives,
which are provided algebraically for polynomials and via boundary
integral equations for harmonic functions.  Numerical experiments
illustrating the practical performance of the approach are provided in
Section~\ref{Experiments}.
Section~\ref{Details} contains further details, such as our chosen
quadrature for edges, that are not as central to the discussion and would
unnecessarily bog down the reading of the paper if they were included earlier.


\section{Preliminary Results}\label{Preliminaries}
It will be useful to have techniques for solving Poisson problems having
polynomial or harmonic source terms.
The following result is a corollary
of~\cite[Theorem 2]{Karachik2010}.
\begin{proposition}\label{Laplacep}
  Suppose $p\in\PP_n$.  There is a $P\in\PP_{n+2}$ such
  that $\Delta P=p$.  More explicitly, if $p(x)=\sum_{|\alpha|\leq
    n}c_\alpha(x-z)^\alpha$, then we may choose
  \begin{align}\label{LaplaceP}
    P(x)=\sum_{|\alpha|\leq n}c_\alpha P_\alpha(x)\quad,\quad
    P_\alpha(x)=\frac{|x-z|^2}{4 (|\alpha|+1)!}
 \sum_{k=0}^{\lfloor|\alpha|/2\rfloor}\frac{(-1)^k(|\alpha|-k)!}{(k+1)!}\left(\frac{|x-z|^2}{4}\right)^k \Delta^k(x-z)^\alpha~,
  \end{align}
  where $\lfloor s\rfloor$ is the integer part of $s$.
\end{proposition}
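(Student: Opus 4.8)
The plan is to verify directly that the explicit $P$ in \eqref{LaplaceP} satisfies $\Delta P=p$; the cited theorem supplies existence, but the stated formula admits a self-contained check. By linearity of $\Delta$ it suffices to treat one monomial at a time, so I would fix a multi-index $\alpha$ with $|\alpha|\le n$ and show $\Delta P_\alpha=(x-z)^\alpha$. Because $\Delta$ commutes with translations, I set $z=0$, write $m=|\alpha|$, $q=x^\alpha$, and $r^2=|x|^2$; then $q$ is homogeneous of degree $m$, each $(r^2/4)^{k+1}\Delta^kq$ is homogeneous of degree $m+2$ (so $P_\alpha\in\PP_{m+2}$ and $P\in\PP_{n+2}$), and the goal becomes $\Delta P_\alpha=q$ for
\[
P_\alpha=\frac{1}{(m+1)!}\sum_{k=0}^{K}\frac{(-1)^k(m-k)!}{(k+1)!}\Bigl(\tfrac{r^2}{4}\Bigr)^{k+1}\Delta^kq,\qquad K=\lfloor m/2\rfloor.
\]

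The key tool is a formula for the Laplacian of $r^{2j}h$ with $h$ homogeneous of degree $\ell$ in $\RR^d$. From $\nabla r^{2j}=2jr^{2j-2}x$, $\Delta r^{2j}=2j(2j+d-2)r^{2j-2}$, the product rule, and Euler's identity $x\cdot\nabla h=\ell h$, one gets
\[
\Delta(r^{2j}h)=r^{2j}\,\Delta h+2j(2j+2\ell+d-2)\,r^{2j-2}h.
\]
Taking $d=2$, $j=k+1$, and $h=\Delta^kq$ (homogeneous of degree $m-2k$), a short simplification yields the per-term identity
\[
\Delta\Bigl[\Bigl(\tfrac{r^2}{4}\Bigr)^{k+1}\Delta^kq\Bigr]=\Bigl(\tfrac{r^2}{4}\Bigr)^{k+1}\Delta^{k+1}q+(k+1)(m-k+1)\Bigl(\tfrac{r^2}{4}\Bigr)^{k}\Delta^{k}q.
\]

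Writing $T_j=(r^2/4)^j\Delta^jq$, I would then apply $\Delta$ termwise to $P_\alpha$ and collect the coefficient $b_j$ of each $T_j$. The first (``raising'') piece of the $k$-th summand contributes to $T_{k+1}$ and the second (``lowering'') piece to $T_k$, so for $1\le j\le K$ the coefficient $b_j$ is a sum of exactly two contributions; after canceling the factorials these reduce to a common positive factor multiplied by $(-1)^{j-1}+(-1)^j$, which vanishes. The lowest term $T_0=q$ receives a contribution only from the lowering piece of $k=0$, whose coefficient works out to $1$, and the highest raising contribution $T_{K+1}$ is identically zero because $\Delta^{K+1}q=0$ (the Laplacian is applied more than $m/2$ times to a degree-$m$ polynomial). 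Thus $\Delta P_\alpha=q$, and summing over $\alpha$ gives $\Delta P=p$.

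The genuinely routine parts are the derivation of the Laplacian identity and the factorial cancellation for $1\le j\le K$. The step requiring the most care is the bookkeeping at the two ends of the sum: confirming that the lowering piece of the bottom term produces precisely $b_0=1$ and that the raising piece of the top term is annihilated, leaving no residual multiples of $r^2,r^4,\dots$. This bottom coefficient is also exactly where the hypothesis $d=2$ enters, since in general dimension it equals $\tfrac{2m+d}{2(m+1)}$, which is $1$ if and only if $d=2$; the constants in \eqref{LaplaceP} are tuned to this two-dimensional value. Once both endpoints are controlled, the interior cancellation is automatic and the identity follows.
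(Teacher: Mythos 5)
Your proof is correct, and it takes a genuinely different route from the paper: the paper offers no argument of its own for Proposition~\ref{Laplacep}, stating it simply as a corollary of a theorem of Karachik and Antropova on nonhomogeneous polyharmonic equations, whereas you verify the formula~\eqref{LaplaceP} directly. Your computation checks out in every detail. Writing $m=|\alpha|$, the identity $\Delta(r^{2j}h)=r^{2j}\Delta h+2j(2j+2\ell+d-2)\,r^{2j-2}h$ for $h$ homogeneous of degree $\ell$ does follow from the product rule, $\Delta r^{2j}=2j(2j+d-2)r^{2j-2}$, and Euler's identity; with $d=2$, $j=k+1$, $h=\Delta^k(x-z)^\alpha$ it yields exactly your raising/lowering decomposition, the interior coefficients of $T_j$ for $1\le j\le\lfloor m/2\rfloor$ cancel in adjacent pairs via the common factor $(m-j+1)!/j!$ times $(-1)^{j-1}+(-1)^j=0$, the bottom coefficient is $m!\,(m+1)/(m+1)!=1$, and the top raising term vanishes because $\Delta^{\lfloor m/2\rfloor+1}$ annihilates any polynomial of degree $m$. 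Your homogeneity observation also establishes $P_\alpha\in\PP_{m+2}$, hence $P\in\PP_{n+2}$, so the full statement is covered. What your route buys is self-containedness and transparency about where the constants in~\eqref{LaplaceP} come from (it also implicitly re-proves the sign-alternating-sum pattern the paper observes in Table~\ref{Palpha}); what the paper's citation buys is brevity and generality, since Karachik's result applies in arbitrary dimension and to polyharmonic operators. One small correction to your closing commentary: $d=2$ is not needed \emph{only} at the bottom coefficient. With the constants fixed as in~\eqref{LaplaceP}, the interior cancellations require it too, because the general-$d$ lowering factor is $(k+1)(2m-2k+d)/2$, which equals your $(k+1)(m-k+1)$ precisely when $d=2$; for $d\ne 2$ every coefficient $b_j$, $0\le j\le\lfloor m/2\rfloor$, fails to have its intended value, not just $b_0$. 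This misstatement is harmless here, since your proof fixes $d=2$ before the termwise computation, but it matters if one tries to read off from your argument how the constants must be retuned in higher dimensions.
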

We note that $P_\alpha$, which satisfies
$\Delta P_\alpha=(x-z)^{\alpha}$, is homogeneous of degree
$|\alpha|+2$, and that these polynomials may be computed
\textit{offline} and tabulated for  $|\alpha|$ up to some specified
threshold, so that $P$ may be computed efficiently from the
coefficients $c_\alpha$ of $p$.  A polynomial $q\in\PP_j$ given with
respect to a shifted monomial basis, as is $p$ above, can be encoded
as a coefficient array of length $J=\binom{j+2}{2}$, once a suitable
enumeration of the multiindices $\alpha$ is chosen.  Such an
enumeration is given in Table~\ref{Palpha} for $|\alpha|\leq 6$,
and its extension to all multiindices is clear.  
\begin{table}
  \centering
  \caption{\label{Palpha} An enumeration of multiindices for
    $|\alpha|\leq 6$, together with the indices and values of the
    non-zero coefficients of $P_\alpha$ from Proposition~\ref{Laplacep}.}
\begin{tabular}{|cccc|}\hline
  $k$&$\alpha$&$P_\alpha$ indices&$P_\alpha$ coefficients\\\hline
  0&$(0,0)$&$(3,5)$&$(1,1)/4$\\\hline
  1&$(1,0)$&$(6,8)$&$(1,1)/8$\\
  2&$(0,1)$&$(7,9)$&$(1,1)/8$\\\hline
  3&$(2,0)$&$(10,12,14)$&$(7,6,-1)/96$\\
  4&$(1,1)$&$(11,13)$&$(1,1)/12$\\
  5&$(0,2)$&$(10,12,14)$&$(-1,6,7)/96$\\\hline
  6&$(3,0)$&$(15,17,19)$&$(3,2,-1)/64$\\
  7&$(2,1)$&$(16,18,20)$&$(11,10,-1)/192$\\
  8&$(1,2)$&$(15,17,19)$&$(-1,10,11)/192$\\
  9&$(0,3)$&$(16,18,20)$&$(-1,2,3)/64$\\\hline
  10&$(4,0)$&$(21,23,25,27)$&$(31,15,-15,1)/960$\\
  11&$(3,1)$&$(22,24,26)$&$(13,10,-3)/320$\\
  12&$(2,2)$&$(21,23,25,27)$&$(-1,15,15,-1)/360$\\
  13&$(1,3)$&$(22,24,26)$&$(-3,10,13)/320$\\
  14&$(0,4)$&$(21,23,25,27)$&$(1,-15,15,31)/960$\\\hline
  15&$(5,0)$&$(28,30,32,34)$&$(9,3,-5,1)/384$\\
  16&$(4,1)$&$(29,31,33,35)$&$(57,35,-21,1)/1920$\\
  17&$(3,2)$&$(28,30,32,34)$&$(-3,63,55,-11)/1920$\\
  18&$(2,3)$&$(29,31,33,35)$&$(-11,55,63,-3)/1920$\\
  19&$(1,4)$&$(28,30,32,34)$&$(1,-21,35,57)/1920$\\
  20&$(0,5)$&$(29,31,33,35)$&$(1,-5,3,9)/384$\\\hline
  21&$(6,0)$&$(36,38,40,42,44)$&$(127,28,-70,28,-1)/7168$\\
  22&$(5,1)$&$(37,39,41,43)$&$(15,7,-1,1)/672$\\
  23&$(4,2)$&$(36, 38, 40, 42, 44)$&$(-99, 2772, 2030, -812,
                                     29)/107520$\\
  24&$(3,3)$&$(37, 39, 41, 43)$&$(-1, 7, 7, -1)/280$\\
  25&$(2,4)$&$(36, 38, 40, 42, 44)$&$(29, -812, 2030, 2772,
                                     -99)/107520$\\
  26&$(1,5)$&$(37, 39, 41, 43)$&$(1, -7, 7, 15)/672$\\
  27&$(0,6)$&$(36, 38, 40, 42, 44)$&$(-1, 28, -70, 28, 127)/7168$\\\hline
\end{tabular}
\end{table}
The mappings $\alpha\mapsto k$ and $k\mapsto\alpha$ corresponding to
Table~\ref{Palpha} are
\begin{align}\label{MultiIndexMaps}
  \alpha\mapsto \binom{|\alpha|+1}{2}+\alpha_2\quad,\quad
  k\mapsto \left(|\alpha|-k+\binom{|\alpha|+1}{2}\,,\,
  k-\binom{|\alpha|+1}{2}\right)
  \mbox{ where }|\alpha|=\left\lfloor\frac{\sqrt{8k+1}-1}{2}\right\rfloor~.
\end{align}
Basic procedures such as computing the product of two polynomials, or
computing the gradient of a polynomial, both of which are needed for
the integral computations discussed in Section~\ref{ReductionToBoundary}, can
be performed efficiently in terms of coefficient arrays.  For
``sparse'' polynomials, i.e. those that involve relatively few
non-zero coefficients, significant efficiency can be gained by storing
only the non-zero coefficients and their indices.
The polynomials $P_\alpha$ from Proposition~\ref{Laplacep} are also
given in Table~\ref{Palpha} for $|\alpha|\leq 6$,
expressed in terms their (few) non-zero coefficients and the indices
of these coefficients.  For example, we see that, for $\alpha=(2,3)$,
$P_\alpha$ is a linear combination of the shifted monomials associated
with multiindices $29\mapsto (6,1)$, $31\mapsto (4,3)$, $33\mapsto
(2,5)$ and $35\mapsto(0,7)$.  More specifically,
\begin{align*}
P_\alpha(x)=\frac{1}{1920}\left(-11(x-z)^{(6,1)}+55(x-z)^{(4,3)}+63(x-z)^{(2,5)}-3(x-z)^{(0,7)}\right)\mbox{
  for }\alpha=(2,3)~.
\end{align*}
Though there are several patterns in the non-zero coefficients of
$P_\alpha$ that may be of interest (and some that might be exploited), we
highlight only one: the alternating (in sign) sum of the coefficients
of $P_\alpha$ is $0$.  For example,
$(-11+(-1)(55)+63+(-1)(-3))/1920=0$, for $\alpha=(2,3)$.
An extension of Table~\ref{Palpha}, for $7\leq |\alpha|\leq 10$, is
given in Table~\ref{Palpha2} in Section~\ref{Details} for convenience.

We recall that, for any function $\phi$ that is harmonic in $K$, there
is a \textit{harmonic conjugate} $\hat\phi$, satisfying
$\Delta\hat\phi=0$ and the Cauchy-Riemann equations,
\begin{align}\label{CauchyRiemann}
\frac{\partial\phi}{\partial x_1}=\frac{\partial\hat\phi}{\partial
  x_2}\quad,\quad
\frac{\partial\phi}{\partial x_2}=-\frac{\partial\hat\phi}{\partial  x_1}~,  
\end{align}
in $K$.
Such a harmonic conjugate is unique, up to an additive constant, and
the orthogonality of $\nabla\phi$ and $\nabla\hat\phi$ implies that
\begin{align}\label{ComplementaryBC}
\frac{\partial\phi}{\partial\mb{n}}=\frac{\partial\hat\phi}{\partial\mb{t}}\quad,\quad
  \frac{\partial\hat\phi}{\partial\mb{n}}=-\frac{\partial\phi}{\partial\mb{t}}
  \mbox{ on }\partial K~,
\end{align}
where $\mb{n}$ is the outward unit normal, and $\mb{t}$ is the unit
tangent in the counter-clockwise direction.  We can compute a
harmonic conjugate $\hat\phi$ by solving the Neumann problem
\begin{align}\label{HarmConjNeum}
  \Delta\hat\phi=0\mbox{ in }K\quad,\quad \frac{\partial\hat\phi}{\partial\mb{n}}=-\frac{\partial\phi}{\partial\mb{t}}
  \mbox{ on }\partial K\quad,\quad \int_{\partial K}\hat\phi\,ds=0~.
\end{align}
The condition $\int_{\partial K}\hat\phi\,ds=0$ ensures that there is
a unique solution of~\eqref{HarmConjNeum}.

We are now ready to describe an approach to computing a function whose
Laplacian is a given harmonic function.
\begin{proposition}\label{Laplacephi}
Suppose $\phi$ is harmonic in $K$.  The following construction
provides a function $\Phi$ such that $\Delta \Phi=\phi$ in $K$.
\begin{enumerate}
\item Determine a solution of the Neumann problem:  $\Delta \hat\phi=0$ in
  $K$, $\partial \hat\phi/\partial\mb{n}=-\partial\phi/\partial\mb{t}$ on
  $\partial K$. 
\item Determine a solution of the Neumann problem:  $\Delta\rho=0$ in
  $K$, $\partial\rho/\partial \mb{n}=(\phi,-\hat\phi)\cdot \mb{n}$ on
  $\partial K$. 
\item Determine a solution of the Neumann problem:  $\Delta\hat\rho=0$ in
  $K$, $\partial\hat\rho/\partial \mb{n}=(\hat\phi,\phi)\cdot \mb{n}$ on
  $\partial K$.
\item Set $\Phi=(x_1\rho+x_2\hat\rho)/4$.
\end{enumerate}
It holds that $\phi$ and $\hat\phi$ are harmonic conjugates, and that
$\rho$ and $\hat\rho$ are harmonic conjugates.
\end{proposition}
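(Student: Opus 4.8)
The plan is to verify the four-step construction in the order given, relying on two ingredients: the characterization~\eqref{HarmConjNeum} of a harmonic conjugate as the solution of a Neumann problem, and the elementary identity $\Delta(x_i u)=2\,\partial u/\partial x_i$, valid for any $u$ harmonic in $K$ (since $\Delta(x_i u)=x_i\Delta u+2\,\partial_{x_i}u+u\,\Delta x_i$ and the first and third terms vanish). First I would observe that the first step is exactly the Neumann problem~\eqref{HarmConjNeum}, so $\hat\phi$ is a harmonic conjugate of $\phi$ and the Cauchy--Riemann relations~\eqref{CauchyRiemann} hold. The central observation is then that the two planar fields $\mb{F}=(\phi,-\hat\phi)$ and $\hat{\mb{F}}=(\hat\phi,\phi)$ are \emph{both} divergence-free and curl-free: a direct computation gives $\nabla\cdot\mb{F}=\partial_{x_1}\phi-\partial_{x_2}\hat\phi$ and scalar curl $\partial_{x_1}(-\hat\phi)-\partial_{x_2}\phi$, each of which vanishes by~\eqref{CauchyRiemann}, and likewise for $\hat{\mb{F}}$.

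Next, since $K$ is simply connected and $\mb{F}$ is curl-free, there is a single-valued scalar potential $\psi$ with $\nabla\psi=\mb{F}$; the divergence-free property gives $\Delta\psi=\nabla\cdot\mb{F}=0$, so $\psi$ is harmonic, and $\partial\psi/\partial\mb{n}=\mb{F}\cdot\mb{n}=(\phi,-\hat\phi)\cdot\mb{n}$, which is precisely the Neumann data of the second step. The compatibility condition for solvability, $\int_{\partial K}\mb{F}\cdot\mb{n}\,ds=\int_K\nabla\cdot\mb{F}\,dx=0$, holds for the same reason. Because solutions of a Neumann problem are unique up to an additive constant, the function $\rho$ produced in the second step therefore satisfies the \emph{gradient} identity $\nabla\rho=\mb{F}=(\phi,-\hat\phi)$, and the identical argument applied to $\hat{\mb{F}}$ yields $\nabla\hat\rho=(\hat\phi,\phi)$ in the third step. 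These two identities immediately give the Cauchy--Riemann equations for the pair $(\rho,\hat\rho)$, namely $\partial_{x_1}\rho=\phi=\partial_{x_2}\hat\rho$ and $\partial_{x_2}\rho=-\hat\phi=-\partial_{x_1}\hat\rho$, so $\rho$ and $\hat\rho$ are harmonic conjugates, as claimed.

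Finally, with $\Phi=(x_1\rho+x_2\hat\rho)/4$ and the harmonicity of $\rho,\hat\rho$, the identity above gives
\begin{align*}
4\,\Delta\Phi=\Delta(x_1\rho)+\Delta(x_2\hat\rho)=2\,\frac{\partial\rho}{\partial x_1}+2\,\frac{\partial\hat\rho}{\partial x_2}=2\phi+2\phi=4\phi,
\end{align*}
so $\Delta\Phi=\phi$ in $K$, completing the verification.

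The step I expect to require the most care is the passage from ``$\rho$ solves the second Neumann problem'' to the stronger conclusion $\nabla\rho=(\phi,-\hat\phi)$, since the Neumann condition by itself only prescribes the normal trace of $\nabla\rho$ on $\partial K$. The resolution is to produce the curl-free potential $\psi$ as an \emph{independent} solution of that very Neumann problem and then invoke uniqueness up to a constant; here simple connectivity of $K$ is exactly what makes $\psi$ single-valued, while the divergence-free property supplies both the harmonicity of $\psi$ and the solvability of the Neumann problem. Once $\nabla\rho$ and $\nabla\hat\rho$ are pinned down exactly, the remaining claims reduce to the one-line differentiations above.
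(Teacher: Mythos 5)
Your proof is correct and takes essentially the same route as the paper's: the Cauchy--Riemann equations~\eqref{CauchyRiemann} make the fields $(\phi,-\hat\phi)$ and $(\hat\phi,\phi)$ conservative (and divergence-free), their potentials are harmonic conjugates of one another, and your final computation $\Delta(x_1\rho+x_2\hat\rho)=2\,\partial_{x_1}\rho+2\,\partial_{x_2}\hat\rho=4\phi$ is exactly the paper's closing step. Your one refinement---explicitly checking the compatibility condition $\int_{\partial K}(\phi,-\hat\phi)\cdot\mb{n}\,ds=0$ and invoking uniqueness of Neumann solutions up to a constant to conclude that the $\rho,\hat\rho$ \emph{produced by the stated Neumann problems} satisfy $\nabla\rho=(\phi,-\hat\phi)$ and $\nabla\hat\rho=(\hat\phi,\phi)$ throughout $K$---tightens a point the paper leaves implicit (it defines the potentials first and reads off the boundary data), but it is a sharpening of the same argument rather than a different one.
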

\begin{proof}
  It follows from the Cauchy-Reimann equations~\eqref{CauchyRiemann} that both of the vector
  fields $(\phi,-\hat\phi)$ and $(\hat\phi,\phi)$ are conservative in $K$, so
  there are functions $\rho$ and $\hat\rho$ such that
  $\nabla\rho=(\phi,-\hat\phi)$ and $\nabla\hat\rho=(\hat\phi,\phi)$
  in $K$.  The potentials $\rho$ and $\hat\rho$ are unique, up to
  additive constants.
  We note that $\rho$ and $\hat\rho$ also satisfy the
  Cauchy-Reimann equations in $K$,
  \begin{align*}
     \frac{\partial \rho}{\partial x_1}=\frac{\partial \hat\rho}{\partial x_2}=\phi\quad,\quad
  \frac{\partial \rho}{\partial x_2}=-\frac{\partial \hat\rho}{\partial x_1}=-\hat\phi~.
  \end{align*}
  It also follows from the Cauchy-Reimann equations for $\phi$,
  $\hat\phi$ that $\Delta\rho=\Delta\hat\rho=0$ in $K$.  In other
  words, $\rho$ and $\hat\rho$ are also harmonic conjugates.   By
  continuously extending their gradients to the boundary, we see that
  their normal derivatives must satisfy
  \begin{align*}
    \frac{\partial\rho}{\partial
    \mb{n}}=\begin{pmatrix}\phi\\-\hat\phi\end{pmatrix}\cdot
    \mb{n}\quad,\quad
    \frac{\partial\hat\rho}{\partial
    \mb{n}}=\begin{pmatrix}\hat\phi\\\phi\end{pmatrix}\cdot
    \mb{n} 
  \end{align*}
  on $\partial K$, which leads to the two Neumann problems given in
  the proposition.  Finally, we have
  \begin{align*}
    \Delta\Phi=\frac{1}{4}\left(2 \frac{\partial \rho}{\partial x_1}+2
    \frac{\partial \hat\rho}{\partial x_2}\right)=\phi\;\mbox{ in K}~,
  \end{align*}
  which completes the proof.
\end{proof}

\begin{remark}
  Taking $\phi$, $\hat\phi$, $\rho$ and $\hat\rho$ as in
  Proposition~\ref{Laplacephi},
  we have $\Delta\widehat\Phi=\hat\phi$ for $\widehat\Phi=(x_1\hat\rho-x_2\rho)/4$.
\end{remark}

The approach described in Proposition~\ref{Laplacephi} involves the
solution of three Neumann problems, each of which can be made
well-posed by imposing the vanishing boundary integral condition as
in~\eqref{HarmConjNeum}.  There are many well-established techniques
involving boundary integral equations for solving such Neumann
problems, and we will use that described in~\cite{Ovall2018}, which
employs Nystr\"om discretizations of (well-conditioned) second-kind
integral equations.  We mention a few relevant features of the
approach in~\cite{Ovall2018} for the conjugate pair $(\phi,\hat\phi)$:
\begin{itemize}
\item $\phi$ is given implicitly in terms of its Dirichlet trace on
  $\partial K$, and supplies the boundary data for the Neumann problem
  for $\hat\phi$ via $\partial\hat\phi/\partial\mb{n}=-\partial\phi/\partial\mb{t}$.
\item A Neumann-to-Dirichlet map,
  $\partial\hat\phi/\partial\mb{n}\mapsto \hat\phi$ on $\partial K$, is
  obtained directly as the solution of a boundary integral equation.
\item A Dirichlet-to-Neumann map, $\phi\mapsto
  \partial\phi/\partial\mb{n}$ on $\partial K$, is then
  obtained from $\hat\phi$ via taking its tangential derivative, $\partial\phi/\partial\mb{n}=\partial\hat\phi/\partial\mb{t}$.
\end{itemize}
In each step, all computations occur only on $\partial K$.


\section{Reducing volumetric integrals to boundary integrals}\label{ReductionToBoundary}

We begin with the integration of polynomials on $K$.  Let
$r\in\PP_n(K)$.  In the spirit of Proposition~\ref{Laplacep}, we can
reduce $\int_Kr\,dx$ to an integral along $\partial K$, by taking
$R\in\PP_{n+2}(K)$ such that $\Delta R=r$.  It then follows that
$\int_Kr\,dx=\int_{\partial K}\partial{R}/\partial\mb{n}\,ds$.
Alternatively, we have a reduction to the boundary based on the
Divergence Theorem and the following simple identity, which can be
verified by direct computation,
\begin{align*}
\nabla\cdot[(x-z)^\alpha\,(x-z)]=(2+|\alpha|)(x-z)^\alpha~.
\end{align*}
Here and following, $z\in\RR^2$ may be chosen arbitrarily---the
barycenter of $K$ is a natural choice.
From this, it follows that
\begin{align}\label{PolyInt}
\int_{K}(x-z)^\alpha\,dx=\frac{1}{2+|\alpha|}\int_{\partial K}(x-z)^\alpha\,(x-z)\cdot\mb{n}\,ds~.
\end{align}
This type of reduction is the core of the method described
in~\cite{Antonietti2018}, where $K$ is a polytope in $\RR^d$.  In this
case $2+|\alpha|$ is replaced by $d+|\alpha|$ above, and further
reductions of the same type can be made due to the fact that the faces
of $K$ are flat.  If $F$ is such a flat face (we would take an edge $e$ in
our case), then $(x-z)\cdot\mb{n}(x)$ is constant for $x\in F$.  More
specifically, for $x\in F$, $(x-z)\cdot\mb{n}(x)$ is the signed distance between $z$
and the hyperplane containing $F$, taking the positive sign if $z\in K$.
Factoring out this constant, the integral on $F$ can be further
reduced to integrals along its $(d-2)$-dimensional (flat) facets, and
so on.   For generic curved boundaries, further simple reductions of
$\int_{\partial K}(x-z)^\alpha\,(x-z)\cdot\mb{n}\,ds$ are not
available.  Regardless,~\eqref{PolyInt} allows for the
efficient evaluation of $\int_K r\,dx$ in terms of integrals along
$\partial K$.
We opt for the approach based on~\eqref{PolyInt}, as opposed to that
based on $\int_Kr\,dx=\int_{\partial
  K}\partial{R}/\partial\mb{n}\,ds$, because it is a bit cheaper.

The integration of $\int_K \nabla v\cdot\nabla w\,dx$ can be naturally
considered in three cases, the first two of which involve at least one
function from $V^{\partial K}_m(K)$.  These easier two integrals are
reduced to
\begin{subequations}\label{GradientIntegrals}
\begin{align}
  \int_K \nabla v\cdot\nabla w\,dx&=0\mbox{ when }
                                    v\in  V^{\partial K}_m(K)\,,\, w\in  V ^{K}_m(K)~,\label{Grad1}\\
  \int_K \nabla v\cdot\nabla w\,dx&=
                                    \int_{\partial K}\frac{\partial
                                    v}{\partial \mb{n}}\,w\,ds\mbox{ when }
                                    v,w\in  V^{\partial K}_m(K)~.\label{Grad2}
\end{align}
\end{subequations}
In the case of~\eqref{Grad2}, both $v$ and $w$ are given (implicitly)
in terms of their Dirichlet data, so a Dirichlet-to-Neumann map,
$v\mapsto \partial v/\partial\mb{n}$ on $\partial K$, is needed to
evaluate the boundary integral.  This can be done as discussed in
Section~\ref{Preliminaries}, or by some other method of choice.

Now suppose that $v,w\in  V^{K}_m(K)$.  These are given (implicitly) in terms of
$p,q\in\PP_{m-2}(K)$ such that $\Delta v=p$ and $\Delta w=q$ in $K$.
Let $P,Q\in\PP_m(K)$ be such that $\Delta P=p$ and $\Delta Q=q$.  We have
\begin{align*}
  \int_K \nabla v\cdot\nabla w\,dx&=\int_K \nabla v\cdot\nabla Q\,dx+
                                    \int_K \nabla v\cdot\nabla
                                    (w-Q)\,dx\\
                                  &=\int_K \nabla
                                    v\cdot\nabla Q\,dx=\int_{\partial
                                    K}\frac{\partial v}{\partial\mb{n}}\,Q\,ds-\int_KpQ\,dx~.
\end{align*}
Summarizing, we have
\begin{align}\label{Grad3}
  \int_K \nabla v\cdot\nabla w\,dx&=\int_{\partial K}\frac{\partial v}{\partial\mb{n}}\,Q\,ds-
                                    \int_{K}pQ\,dx\mbox{ when }v,w\in  V^{K}_m(K)~.
\end{align}
Since $pQ\in\PP_{2m-2}(K)$, the integral $\int_{K}pQ\,dx$ can be
adresses as discussed at the beginning of this section.
The only
term in the boundary integrals in~\eqref{Grad3} that requires further
consideration is $\partial v/\partial\mb{n}$.  Unlike~\eqref{Grad2},
$v$ is not harmonic in this case, so an additional step is needed to
determine $\partial v/\partial\mb{n}$.   This can be done as follows,
\begin{align}\label{NeumannTraceHardCase}
  \frac{\partial v}{\partial \mb{n}}=\frac{\partial (v-P)}{\partial\mb{n}}+\frac{\partial P}{\partial\mb{n}}~.
\end{align}
The term $\partial P/\partial\mb{n}$ can be computed directly from the
known polynomial $P$, and the term $\partial (v-P)/\partial\mb{n}$ can be computed
using a Dirichlet-to-Neumann map as discussed earlier, because $v-P$
is harmonic, with known boundary trace, $-P$.

The integral $\int_K vw\,dx$ is more challenging than its gradient
counterpart, and we do not bother splitting into cases as before.  To
fix notation,
\begin{align*}
\Delta v=p\in\PP_{m-2}(K)\mbox{ in }K\quad,\quad
  v=f\in\PP^*_m(K)\mbox{ on }\partial K~,\\
  \Delta w=q\in\PP_{m-2}(K)\mbox{ in }K\quad,\quad
  w=g\in\PP^*_m(K)\mbox{ on }\partial K~.
\end{align*}
As above, $P,Q\in\PP_m(K)$ satisfy $\Delta P=p$ and $\Delta Q=q$.  We
have
\begin{align*}
  \int_Kvw\,dx=\int_K(v-P)(w-Q)\,dx+\int_K(v-P)Q\,dx+\int_KP(w-Q)\,dx+\int_KPQ\,dx~.
\end{align*}
Now, take $P^*,Q^*\in\PP_{m+2}(K)$ such that
$\Delta P^*=P$ and $\Delta Q^*=Q$, and $\Phi$ such that
$\Delta\Phi=v-P$ in $K$, as indicated in Proposition~\ref{Laplacephi}.
At this stage, we have
\begin{align*}
  \int_Kvw\,dx=\int_K\Delta\Phi\,(w-Q)\,dx+\int_K(v-P)\Delta
  Q^*\,dx+\int_K\Delta P^*\,(w-Q)\,dx+\int_KPQ\,dx~.
\end{align*}
The integrands in four of these integrals are the product of a harmonic function with
the Laplacian of a second function.  Using Green's identities to move
the Laplacian over to the harmonic function, we obtain
\begin{align}\label{L2case}
  \begin{split}
  \int_Kvw\,dx&=
  \int_{\partial K}\frac{\partial\Phi}{\partial\mb{n}}\,(g-Q)-
  \Phi\,\frac{\partial(w-Q)}{\partial\mb{n}}\,ds+
 \int_{\partial K}\frac{\partial Q^*}{\partial\mb{n}}\,(f-P)-
                Q^*\,\frac{\partial(v-P)}{\partial\mb{n}}\,ds\\
              &+
\int_{\partial K}\frac{\partial P^*}{\partial\mb{n}}\,(g-Q)-
                P^*\,\frac{\partial(w-Q)}{\partial\mb{n}}\,ds
+\int_{K}PQ\,dx~.
\end{split}
\end{align}
As with~\eqref{Grad3}, we handle the polynomial integral as discussed
at the beginning of this section.
In the case that $w=r\in\PP_n(K)$, we have a much simpler formula for
reducing the integral to the boundary.  Let $R\in\PP_{n+2}(K)$ be such
that $\Delta R=r$.  We have
\begin{align}\label{PolynomialSourceTerm}
  \int_Kvw\,dx&=\int_{\partial K}(f-P)\,\frac{\partial R}{\partial \mb{n}}-R\,\frac{\partial (v-P)}{\partial \mb{n}}\,ds+\int_KPr\,dx~.
\end{align}
This simpler formula may be convenient for integrating basis function
against polynomial source terms in the formation of the righthand side
(load vector) for the finite element system, for example.
A different simplification of~\eqref{L2case} may be given when
both $v$ and $w$ are harmonic.  In this case,
we may
take $P=Q=P^*=Q^*=0$ in~\eqref{L2case}, and the formula reduces to
	\begin{align}
		\label{L2caseHarmonic}
		\int_K vw~dx
		=
		\int_{\partial K} \dfrac{\partial\Phi}{\partial\mathbf{n}}g~ds
		-\int_{\partial K}\Phi\dfrac{\partial w}{\partial\mathbf{n}}~ds~.
	\end{align}
        \begin{remark}\label{NormalDerivPhi}  Both~\eqref{L2case} and
          its special case~\eqref{L2caseHarmonic} involve the normal
          derivative of the function $\Phi$ of Proposition~\ref{Laplacephi}.
          Taking $\phi=v-P$ as above, and using the notation of
          Proposition~\ref{Laplacephi}, this normal derivative is
          given by
          \begin{align*}
            \frac{\partial\Phi}{\partial\mb{n}}=\begin{pmatrix}\rho+x_1\phi+x_2\hat\phi\\\hat\rho-x_1\hat\phi+x_2\phi\end{pmatrix}\cdot\frac{\mb{n}}{4}~.
          \end{align*}
          The Dirichlet data of $v$ is given, and $P$ is readily
          obtained from $p$ based on Proposition~\ref{Laplacep} and
          look-up tables such as Table~\ref{Palpha}, so we have easy
          access to the Dirichlet data of $\phi$.  The functions
          $\hat\phi$, $\rho$ and $\hat\rho$ are all solutions of
          Neumann problems, and it is clear that we only need their
          Dirichlet data to evaluate $\partial\Phi/\partial\mb{n}$.
          The integral equations~\cite{Ovall2018} (see also
          Section~\ref{Details}) that we employ provide such
          Neumann-to-Dirichlet maps directly.
        \end{remark}


\section{Numerical Illustrations}\label{Experiments}
We illustrate the performance of our scheme to compute the target
integrals~\eqref{TargetIntegrals} by reducing them to boundary
integrals, as described in Section~\ref{ReductionToBoundary} and
highlighted by the formulas~\eqref{Grad3} and~\eqref{L2case}, and
special cases such as~\eqref{L2caseHarmonic}.  Further details on the
boundary integral equation techniques used to compute
Dirichlet-to-Neumann and Neumann-to-Dirichlet used in our approach
will be given in Section~\ref{Details}, where some discussion of the
underlying quadrature(s) will also be provided.  Here we merely state
that these quadratures are governed by two parameters $n$ and $\sigma$,
where $n$ dictates the number of quadrature points used on each edge
of $\partial K$, and $\sigma$ determines the ``strength'' of a
change-of-variable used to define the quadrature.  In the following experiments, we fix $\sigma=7$, and
vary $n$ to illustrate rapid convergence with respect to this parameter. 

\begin{example}[Constant Functions]
	\label{ConstantFunctionExample}
	Consider the case where $v,w\in H^1(K)$ are harmonic and have
        a constant boundary trace $v|_{\partial K}=w|_{\partial K}=1$.
        Clearly, $v=w=1$ in $K$, and $\int_K vw~dx = |K|$.  As a first
        basic test of our quadrature approach, we compare the computed
        value of $\int_K vw~dx$ to $|K|$ in Table
        \ref{ConstantFunctionTable} for three different cases: the
        unit square ($|K|=1$), the unit circle ($|K|=\pi$), and the
        puzzle piece described in Example~\ref{PuzzlePieceExample}
        ($|K|=1$).  We treat the circle as having two edges, with
        the two vertices at opposite ends of a diameter.

        In this special case of constant functions, if we were to follow the
        construction described in Proposition~\ref{Laplacephi} to
        obtain a $\Phi$ such that $\Delta\Phi=v=1$ ``by hand'', we
        could take $\hat{v}=0$ as the harmonic conjugate of $v$, and
        obtain $\rho=x_1$ and $\hat\rho=x_2$, ultimately yielding the
        familiar $\Phi=|x|^2/4$.  Using this $\Phi$, we see
        that~\eqref{L2caseHarmonic} reduces to
        $|K|=(1/2)\int_{\partial K}x\cdot\mb{n}\,ds$, which can also
        be seen as a special case of~\eqref{PolyInt}, with
        $\alpha=(0,0)$ and $z=(0,0)$.  However, instead of
        approximating $(1/2)\int_{\partial K}x\cdot\mb{n}\,ds$
        directly, we proceed with the approach described
        by~\eqref{L2caseHarmonic}, which only ``knows'' that $v$ and
        $w$ are harmonic and have given Dirichlet data.
        Table~\ref{ConstantFunctionTable} records the absolute errors
        in our quadrature approximations, and exhibits rapid
        convergence with respect the parameter $n$ governing the
        number of quadrature points used on each edge.
	\begin{table}[]
		\centering
		\caption{Absolute errors in the approximation of $|K|$
                  via \eqref{L2caseHarmonic} in
                  Example~\ref{ConstantFunctionExample}, for which
                  $\Delta v=\Delta w=0$ in $K$ and $v=w=1$ on
                  $\partial K$.}
		\label{ConstantFunctionTable}
		\begin{tabular}{|cccc|}
			\hline 	$n$		&unit square	&unit circle&puzzle piece\\ \hline
			     4	&6.2674e-03	&1.1254e-02  &1.2107e-03\\
			     8	&1.4776e-05   &1.8674e-06  &7.5746e-06\\
			    16	&1.0118e-07	&2.4451e-09  &3.3861e-07\\
			    32	&1.1940e-10	&8.9906e-12  &5.4846e-11\\
			    64	&6.2350e-13	&2.9310e-14  &1.3824e-12\\
			\hline
		\end{tabular}
              \end{table}
\end{example}

\begin{example}[Unit Square]\label{UnitSquareExample}
  Let $K=(0,1)\times(0,1)$ be the unit square.  It holds that
  $\dim V_m(K)=m(m+7)/2$.  We make a brief comparison with the tensor
  product polynomials of degree $\leq m$ in each variable,
  $\QQ_m=\mathrm{span}\{x^iy^j:\,0\leq i,j\leq m\}$, before testing
  our quadratures.  Let $\QQ_m(K)$ denote the restriction of $\QQ_m$
  to $K$.  We have $\dim\QQ_m(K)=(m+1)^2>\dim V_m(K)$ for $m\geq 3$.
  It holds that $V_1(K)=\QQ_1(K)$.  The fact that
  $x(1-x)y(1-y)\in\QQ_2(K)\setminus V_2(K)$ shows that
  $V_2(K)\neq\QQ_2(K)$, although $\dim V_2(K)=\dim\QQ_2(K)=9$.

  We will approximate several entries of the element mass and
  stiffness matrices associated with a basis of $V_2(K)$.  The
  standard basis for $V_1(K)=V_1^{\partial K}(K)=\QQ_1(K)$ consists of
  the four ``vertex functions'',
\begin{align*}
  (v_0\,,\,v_1\,,\,v_2\,,\,v_3)=((1-x)(1-y)\,,\,x(1-y)\,,\,xy\,,\,(1-x)y)~,
\end{align*}
so called because $\Delta v_j=0$ and $v_j(z_i)=\delta_{ij}$ for
$0\leq i,j\leq 3$, where
\begin{align*}
  (z_0\,,\,z_1\,,\,z_2\,,\,z_3)=\left((0,0)\,,\,(1,0)\,,\,(1,1)\,,\,(0,1)\right)
\end{align*}
are the vertices given in counter-clockwise order.  We add to this
four ``edge functions'', satisfying $\Delta w_j=0$ in $K$ and
$w_j=v_jv_{j+1}$ on $\partial K$.
Here, and elsewhere in this example, all subscripts should be understood modulo $4$, e.g. $v_4=v_0$,
$v_5=v_1$, $v_6=v_2$. 
The vertex and edge functions
together form a hierarchical basis for $V_2^{\partial K}(K)$.  We
complete a basis for $V_2(K)$ by including the function satisfying
$\Delta \tilde{w}=-1$ in $K$ and $\tilde{w}=0$ on $\partial K$.
Using separation of variables, we can obtain series expansions of
$w_j$ and $\tilde{w}$.  For example,
\begin{align*}
  w_1=\sum_{k\in2\NN-1}^\infty\frac{8 \sinh(k\pi
       x)\sin(k\pi y)}{(k\pi)^3 \sinh(k\pi)}\quad,\quad
  \tilde{w}=\sum_{k,\ell\in2\NN-1}^\infty\frac{16 \sin(k\pi x) \sin(\ell\pi y)}{k\ell(k^2+\ell^2)\pi^4}~.
\end{align*}
Using these formulas, we obtain reference values for the desired
integrals that are either exact, or obtained to very high precision
from series expansions.  For the approximated reference values, i.e. those that are given in
decimal form, all digits are correct up to rounding in the final
digit---{\sc Mathematica} was used to compute them, employing very
high precision arithmetic.
These reference
values are used to test our quadrature on several combinations of the
basis functions.  Table~\ref{UnitSquareTableB} provides convergence
data for these tests, as well as the reference values used to compute
the errors.
\begin{table}[]
		\centering
		\caption{Absolute errors for computing the $L^2$ inner
                  product $\int_K vw~dx$ and the $H^1$ semi-inner
                  product $\int_K\nabla v\cdot\nabla w~dx$ on the unit
                  square, as described in Example
                  \ref{UnitSquareExample}. Following the notation in
                  that example, $v_j$ denotes the ``vertex''
                  functions, $w_j$ denotes the ``edge'' functions, and
                  $\tilde{w}$ denotes the ``bubble'' function.}
		\label{UnitSquareTableB}
\begin{tabular}{|ccccl|}\hline
Functions &$n$ &$L^2$ error	&$H^1$ error&Reference Values\\\hline
$v_j\;,\;v_j$
&4  &1.8197e-03&6.9331e-03&$L^2$: 1/9\\
&8  &5.0843e-06&3.6484e-05&$H^1$: 2/3\\
&16&3.3700e-08&1.1758e-07&\\
&32&4.4464e-11&1.1843e-10&\\
&64&2.4278e-13&6.5759e-13&\\
  \hline
$v_j\;,\;v_{j\pm 1}$  
&4  &8.3471e-04&8.0181e-05&$L^2$: 1/18\\
&8  &6.3177e-07&7.4406e-06&$H^1$: $-1/6$\\
&16&2.6840e-09&1.8098e-08&\\
&32&4.7440e-12&4.0427e-12&\\
&64&5.2902e-14&8.5895e-13&\\  
  \hline
$v_j\;,\;v_{j+2}$
&4  &2.7437e-04&7.5354e-03&$L^2$: 1/36\\
&8  &4.6195e-06&2.1527e-05&$H^1$: $-1/3$\\
&16&2.1823e-08&8.1290e-08&\\
&32&2.3449e-11&1.1009e-10&\\
&64&1.0834e-13&4.6124e-13&\\
\hline
$v_0\;,\;w_1$
&4  &1.4790e-06&9.6344e-04&$L^2$: 6.069682826514464e-03\\
&8  &1.2707e-06&6.1960e-06&$H^1$: $-1/12$\\
&16&6.8236e-09&3.1021e-08&\\
&32&6.8066e-12&4.5776e-11&\\
&64&2.3823e-14&4.1675e-14&\\
\hline
$v_1\;,\;w_1$
&4  &5.1158e-04&1.5100e-03&$L^2$: 1.802485697075799e-02\\
&8  &6.5354e-06&6.2160e-06&$H^1$: 1/12\\
&16&9.6573e-09&3.1038e-08&\\
&32&1.1113e-11&4.5842e-11&\\
&64&8.9987e-14&6.6937e-13&\\
\hline
$w_j\;,\;w_j$
&4  &1.6966e-04&2.0778e-03&$L^2$: 5.195037581961447e-03\\
&8  &2.7239e-06&3.6914e-05&$H^1$: 1.054327612163653e-01\\
&16&7.7508e-09&9.0495e-08&\\
&32&8.6327e-12&9.7762e-11&\\
&64&4.6582e-14&5.0088e-13&\\
\hline
$\tilde{w}\;,\;\tilde{w}$
&4  &6.6230e-06&1.1888e-03&$L^2$: 1.702510524718458e-03\\
&8  &1.8788e-07&5.8248e-06&$H^1$: 3.514425373878843e-02\\
&16&1.8161e-09&3.1897e-08&\\
&32&2.3060e-12&3.1770e-11&\\
&64&1.1535e-14&1.5150e-13&\\
\hline
$v_j\;,\;\tilde{w}$
&4  &1.7543e-05&0&$L^2$: 8.786063434697107e-3\\
&8  &2.3409e-07&0&$H^1$: 0\\
&16&2.5401e-09&0&\\
&32&3.3059e-12&0&\\
&64&1.4806e-14&0&\\
\hline
$w_j\;,\;\tilde{w}$
&4  &2.3668e-05&0&$L^2$: 1.769711697503764e-03\\
&8  &1.2301e-07&0&$H^1$: 0\\
&16&7.4787e-11&0&\\
&32&4.3801e-14&0&\\
&64&1.9227e-15&0&\\
  \hline
\end{tabular}
\end{table}
As in the previous example, we observe rapid convergence and small
errors.

We now consider integrals involving $v_\alpha\in V^K_m(K)$ satisfying
	\begin{align*}
		-\Delta v_\alpha=x^\alpha\mbox{ in }K\quad,\quad
          v_\alpha=0\mbox{ on }\partial K~, 
	\end{align*}
	with $|\alpha|\leq m$.
	The integrals in \eqref{TargetIntegrals} have the exact values
	\begin{subequations}
	\begin{align}
		\label{InnerProduct1}
		\int_K v_\alpha v_\beta\,dx
		&=4\sum_{k=1}^\infty\sum_{\ell=1}^\infty 
			\frac{S_{\alpha_1,k}S_{\alpha_2,\ell}S_{\beta_1,k}S_{\beta_2,\ell}}{\pi^4(k^2+\ell^2)^2}
		~,\\
		\label{InnerProduct2}
		\int_K\nabla v_\alpha\cdot\nabla w_\alpha~dx
		&=4\sum_{k=1}^\infty\sum_{\ell=1}^\infty 
			\frac{S_{\alpha_1,k}S_{\alpha_2,\ell}S_{\beta_1,k}S_{\beta_2,\ell}}{\pi^2(k^2+\ell^2)}~,
	\end{align}
	\end{subequations}
	where, for integers $a\geq0$ and $\ell\geq1$ we have, following from \cite[Identity (3.761.5)]{Gradshteyn2007}, that
	\begin{align}\label{GradshteynIdentity}
		S_{a,\ell}=\int_0^1 t^a\sin(\ell\pi t)\,dt
		&=(-1)^{\ell+1}\sum_{j=0}^{\floor{a/2}}\frac{(-1)^j}{(\ell\pi)^{2j+1}}\frac{a!}{(a-2j)!}
	          -(-1)^{\floor{a/2}}\frac{a!(a-2\floor{a/2}-1)}{(\ell\pi)^{a+1}}~.
	\end{align}
As before, we obtain reference values that are exact in all digits
shown, up to rounding in the final digit.
Table~\ref{UnitSquareTable} reports the absolute 
        errors of both the $L^2$ inner product as computed
        with~\eqref{L2case}, and the $H^1$ semi-inner product as
        computed with~\eqref{GradientIntegrals} and~\eqref{Grad3}.  Again, we observe rapid convergence and
        small errors.
\begin{table}[]
\centering
\caption{Absolute errors for computing the $L^2$ inner product
  $\int_K v_\alpha v_\beta~dx$ and the $H^1$ semi-inner product
  $\int_K\nabla v_\alpha\cdot\nabla v_\beta~dx$ on the unit square, as
  described in Example \ref{UnitSquareExample}.}
\label{UnitSquareTable}
\begin{tabular}{|cccccc|}\hline
$\alpha$&$\beta$&$n$&$L^2$ error&$H^1$ error&Reference Values
\\\hline
(0,0) 	& (0,0)
& 4 		& 6.6230e-06 	& 1.1888e-03	&$L^2$: 1.702510524718458e-03 \\ 
&& 8 	& 1.8788e-07 	& 5.8248e-06 &$H^1$: 3.514425373878843e-02\\  
&& 16 	& 1.8161e-09 	& 3.1897e-08 &\\ 
&& 32 	& 2.3060e-12 	& 3.1770e-11 &\\ 
&& 64 	& 1.1535e-14 	& 1.5127e-13 &\\ 
\hline
(1,0) 	& (0,0)
& 4 		& 3.1495e-05 	& 5.1747e-04 	&$L^2$: 8.512552623592291e-04 \\ 
&& 8 	& 1.3546e-07 	& 3.0402e-06  &$H^1$: 1.757212686939421e-02\\ 
&& 16 	& 1.2401e-09 	& 1.6264e-08 &\\ 
&& 32 	& 1.5662e-12 	& 1.6000e-11 &\\ 
&& 64 	& 6.4370e-15 	& 1.6175e-14 &\\ 
\hline
(1,1) 	& (1,0)
& 4 		& 2.8944e-05 	& 2.4156e-05  &$L^2$: 2.216128146808729e-04\\ 
&& 8 	& 1.5553e-07 	& 1.0527e-06  &$H^1$: 4.876460403509895e-03\\ 
&& 16 	& 1.2923e-09 	& 4.2780e-09 &\\ 
&& 32 	& 1.6541e-12 	& 2.9498e-12 &\\ 
&& 64 	& 3.6738e-15 	& 7.3119e-14 &\\ 
\hline
(2,1) 	& (0,2)
& 4 		& 1.0205e-05 	& 3.6082e-05 	&$L^2$: 8.101386165180633e-05\\ 
&& 8 	& 7.0511e-08 	& 1.6661e-07 	&$H^1$: 1.905102279276017e-03\\ 
&& 16 	& 6.1937e-10 	& 8.0122e-10 &\\ 
&& 32 	& 7.9987e-13 	& 2.4343e-12 &\\ 
&& 64 	& 7.3959e-15 	& 7.0453e-14 &\\ 
\hline
(4,1) 	& (3,2)
& 4 		& 1.7520e-06 	& 1.5966e-05 	&$L^2$: 9.507439861840766e-06\\ 
&& 8 	& 2.6874e-08 	& 2.4853e-07 	&$H^1$: 3.269201405690909e-04\\ 
&& 16 	& 1.8436e-10 	& 1.1472e-09 &\\ 
&& 32 	& 2.1303e-13 	& 8.4067e-13 &\\ 
&& 64 	& 4.7769e-16 	& 9.2503e-15 &\\ 
\hline
(5,1) 	& (3,3)
& 4 		& 9.5548e-07 	& 1.2468e-05 	&$L^2$: 4.942357655448965e-06 \\ 
&& 8 	& 1.4447e-08 	& 1.1744e-07  &$H^1$: 1.881216015506745e-04\\ 
&& 16 	& 1.0333e-10 	& 4.0048e-10 &\\ 
&& 32 	& 1.2090e-13 	& 1.0942e-13 &\\ 
&& 64 	& 4.3990e-16 	& 3.8299e-17 &\\ 
\hline
(4,2) 	& (4,2)
& 4 		& 1.2419e-06 	& 1.4192e-05 &$L^2$: 4.456767076898193e-06\\ 
&& 8 	& 1.8471e-08 	& 2.1324e-07 	&$H^1$: 1.792263895426231e-04\\ 
&& 16 	& 1.2935e-10 	& 1.0114e-09 &\\ 
&& 32 	& 1.4892e-13 	& 7.2965e-13 &\\ 
&& 64 	& 3.0037e-16 	& 1.1613e-14 &\\ 
\hline
\end{tabular}
\end{table}

\end{example}

\begin{example}[Pac-Man]
  For any constant $\mu>0$, it holds that the function
  $v=r^{\mu}\sin(\mu\theta)$ is harmonic in $\RR^2$, except perhaps at
  the origin. We consider the case where $1/2<\mu<1$, so
  that $v$ has an unbounded gradient at the origin, and
  take $K$ to be the sector of the unit circle given in terms of polar
  coordinates by
  \label{PacManExample}
  \begin{align*}
    K = \{(r,\theta):0< r<1,0<\theta<\pi/\mu\}~.
  \end{align*}
  The boundary $\partial K$ is partitioned into three edges, one
  of which is a circular arc.

  Let $1/2<\nu\leq \mu$, and consider the
  three functions
  \begin{align*}
    v_1=r^{\mu}\sin(\mu\theta)\quad,\quad
    v_2=r^{\nu}\sin(\nu\theta)\quad,\quad
    v_3=(1 - r^2)r^2 \sin(\theta) \sin(\theta - \pi/\mu)~. 
  \end{align*}
  Although $v_1$ and $v_2$ are not in $V_m(K)$ for any $m$, our
  methods for reducing integral involving them to boundary integral
  still apply, e.g.~\eqref{L2caseHarmonic} still holds, and we use them
  below.  Indeed, the construction of $\Phi$ in Proposition~\ref{Laplacephi}
  does not rely on the given Dirichlet data for $v,w$ being in
  $\mathbb{P}_p^*(\partial K)$.  One sees that $v_3\in V_4^{\partial K}(K)$,
as $v=0$ on $\partial K$, and
\begin{align*}
  v_3&=(1-x^2-y^2)(\cos(\pi/\mu)~y-\sin(\pi/\mu)~x)y~,\\
  \Delta v_3&= 2\cos(\pi/\mu)-2\cos(\pi/\mu)\,x^2-14\cos(\pi/\mu)\,y^2+12\sin(\pi/\mu)\,xy~.
\end{align*}  
 We can compute the integrals~\eqref{TargetIntegrals} for these
 functions analytically,
\begin{align*}
  \int_K v_1v_2~dx
  =
  \begin{cases}
    \dfrac{\pi}{4\mu(\mu+1)}~,	&\text{if}~\nu=\mu~,
    \\
    \dfrac{\mu\sin(\nu\pi/\mu)}{(\mu+\nu+2)(\mu^2-\nu^2)}~,
    &\text{if}~\nu<\mu~,
  \end{cases}
      \quad,\quad
      \int_K \nabla v_1\cdot\nabla v_2~dx
      =
      \begin{cases}
        \pi/2~,	&\text{if}~\nu=\mu~,
        \\
        \dfrac{\mu\nu\sin(\nu\pi/\mu)}{\mu^2-\nu^2}~,
        &\text{if}~\nu<\mu~,
      \end{cases}
\end{align*}       
        \begin{align*}
          \int_{K}v_2v_3\,dx=\frac{2\nu\sin(\pi/\mu)\,\sin(\nu\pi/\mu)-4\cos(\pi/\mu)\,(1-\cos(\nu\pi/\mu))}{\nu(\nu+4)(\nu+6)(\nu^2-4)}\quad,\quad
          \int_K \nabla v_2\cdot\nabla v_3\,dx=0~.
        \end{align*}

        In our experiments, we take $\mu=4/7$ and $\nu=2/7$.  The
        absolute errors and (exact) reference values are reported in
        Table~\ref{PacManTable}.  Although we again observe good
        convergence of the quadrature, it is not as rapid as what was
        observed in earlier examples, and the errors do not get near
        machine precision.  This is due the strong singularities of
        the integrands $v_1$ and $v_2$ near the origin.
	\begin{table}[]
		\centering
		\caption{
			Absolute errors for the Pac-Man domain considered in Example \ref{PacManExample},
			using $v_1=r^\mu\sin\mu\theta$ and
                        $v_2=r^\nu\sin\nu\theta$ and $v_3=(1 -
                        r^2)r^2\sin(\theta)\sin(\theta - \pi/\mu)$,
                        with $\mu=4/7$ and $\nu=2/7$.
		}
		\label{PacManTable}
\begin{tabular}{|ccccl|}\hline
Functions &$n$ &$L^2$ error	&$H^1$ error&Reference Values\\\hline
$v_1\;,\;v_1$
&4  &9.4414e-02&2.1575e-01&$L^2$: $49\pi/176$\\
&8  &6.8100e-03&2.1041e-02&$H^1$: $\pi/2$\\
&16&3.0614e-04&9.5614e-04&\\
&32&4.5945e-06&1.4420e-05&\\
&64&2.2640e-08&7.1147e-08&\\
  \hline
$v_1\;,\;v_{2}$  
&4  &8.9563e-02&2.5750e-01&$L^2$: 49/60\\
&8  &8.5462e-03&3.7106e-02&$H^1$: 2/3\\
&16&6.2863e-04&3.5209e-03&\\
&32&1.6028e-05&1.0129e-04&\\
&64&1.6654e-07&5.6503e-07&\\  
  \hline
$v_1\;,\;v_{3}$
&4  &9.0079e-02&0&$L^2$: $16807\sqrt{2}/264960$\\
&8  &4.3964e-04&0&$H^1$: 0\\
&16&1.7055e-05&0&\\
&32&2.5349e-07&0&\\
&64&1.2475e-09&0&\\
\hline
$v_2\;,\;w_3$
&4  &7.7313e-02&0&$L^2$: $2401\sqrt{2}/31680$\\
&8  &6.0152e-04&0&$H^1$: 0\\
&16&5.1225e-05&0&\\
&32&1.4916e-06&0&\\
&64&1.6999e-08&0&\\
\hline
\end{tabular}
\end{table}         
        In practice, the asymptotic behavior of functions in $V_m(K)$
        is known a priori, with the singular behavior near corners of
        $K$ determined only by the angles at these corners
        (cf.~\cite{Wigley1964,Grisvard1985,Grisvard1992}).  Although
        we expect that the level of accuracy achieved by our current
        approach for such integrals is already sufficient for most practical
        computations, it should be possible to further improve the
        accuracy by exploiting the knowledge of the asymptotics near
        the corners.  Singularity subtracting techniques
        (cf.~\cite{Wigley1969,Bruno2009}) have been successfully used
        to this end in related contexts, and we may consider such modifications
        in future work.	
\end{example}

\begin{example}[Puzzle Piece]
	\label{PuzzlePieceExample}
	In previous examples, we have examined the convergence
        rates of our method on relatively simple domains where the
        values of the integrals in \eqref{TargetIntegrals} are
        known exactly, or highly accurate reference values can be
        computed using series expansions.  We now consider a domain $K$, the puzzle piece
        depicted in Figure~\ref{PuzzlePieceFigure}, for which such
        reference values are not available.
        The boundary $\partial K$ is
        partitioned into 12 edges, 4 of which are circular arcs.  The
        cell $K$ can be described in terms of two parameters, the
        radius $r$ of the circular sectors, and the perpendicular distance $b<r$ from
        the centers of these circles to the line containing the two
        adjacent straight edges.  In our case, $r=0.22$ and $b=0.17$.
        The length each of the straight edges is
        $1/2-\sqrt{r^2-b^2}\approx 0.3604$.
        In keeping with common terminology for jigsaw puzzles, we
        will refer to the circular protrusions as ``tabs'' and the
        circular indentions as ``blanks''.  The interior angle at
        four vertices where the tabs meet the straight edges is
        $3\pi/2+\arcsin(b/r)\approx\pi/0.5614$, and the interior angle at the four vertices
        where the blanks meet the straight edges is $\pi/2-\arcsin(b/r)\approx\pi/4.5685$.
	If the tabs are cut off and used to fill the blanks, the
        resulting shape is a unit square, which we used to
        assert that $|K|=1$ in Example~\ref{ConstantFunctionExample}.

        In~\cite{Anand2020}, we provide provide a detailed discussion of how to
        construct a spanning set, and then a basis, for
        $\PP_m^*(\partial K)$ when $K$ has curved edges.  Here, we
        will provide just enough detail to make sense of the
        corresponding numerical experiments.  We begin by labeling the
        vertices $z_0,\ldots, z_{11}$, starting at the lower left
        corner and proceeding counter-clockwise, as seen in Figure~\ref{PuzzlePieceFigure}.
	\begin{figure}[]
		\centering		
		\includegraphics[width=0.45\textwidth]{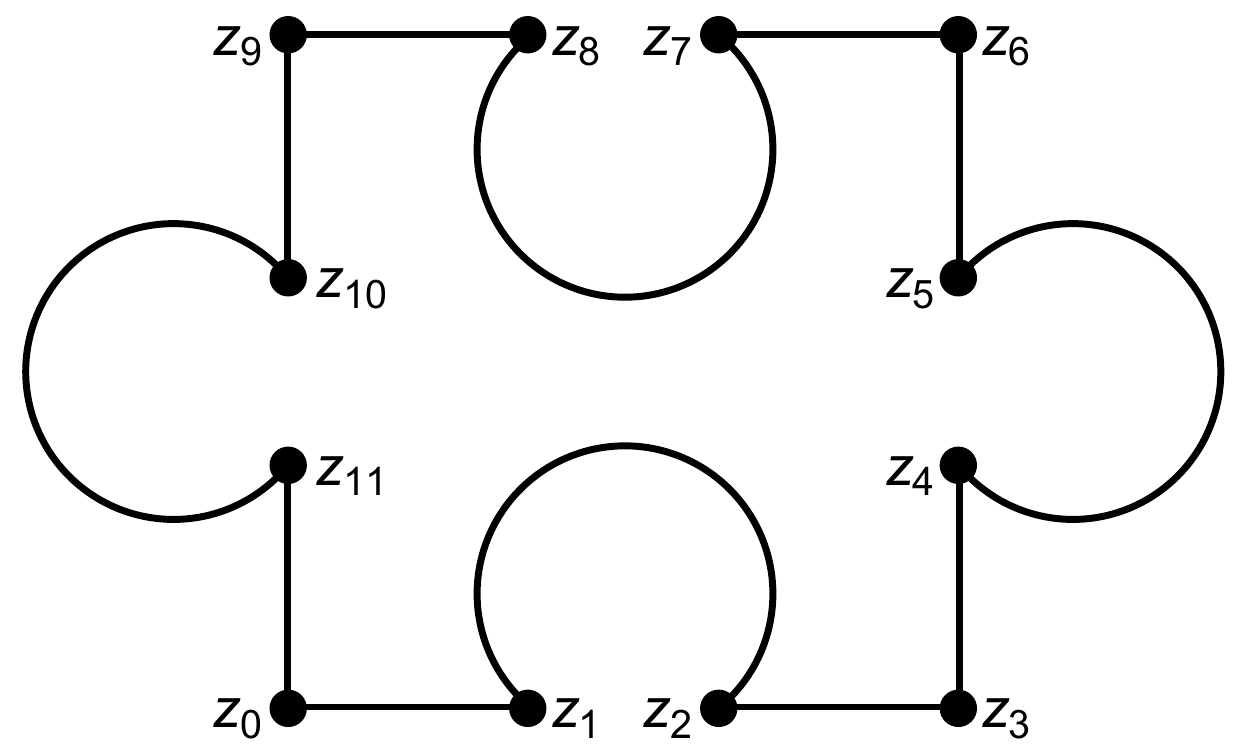}\includegraphics[width=0.45\textwidth]{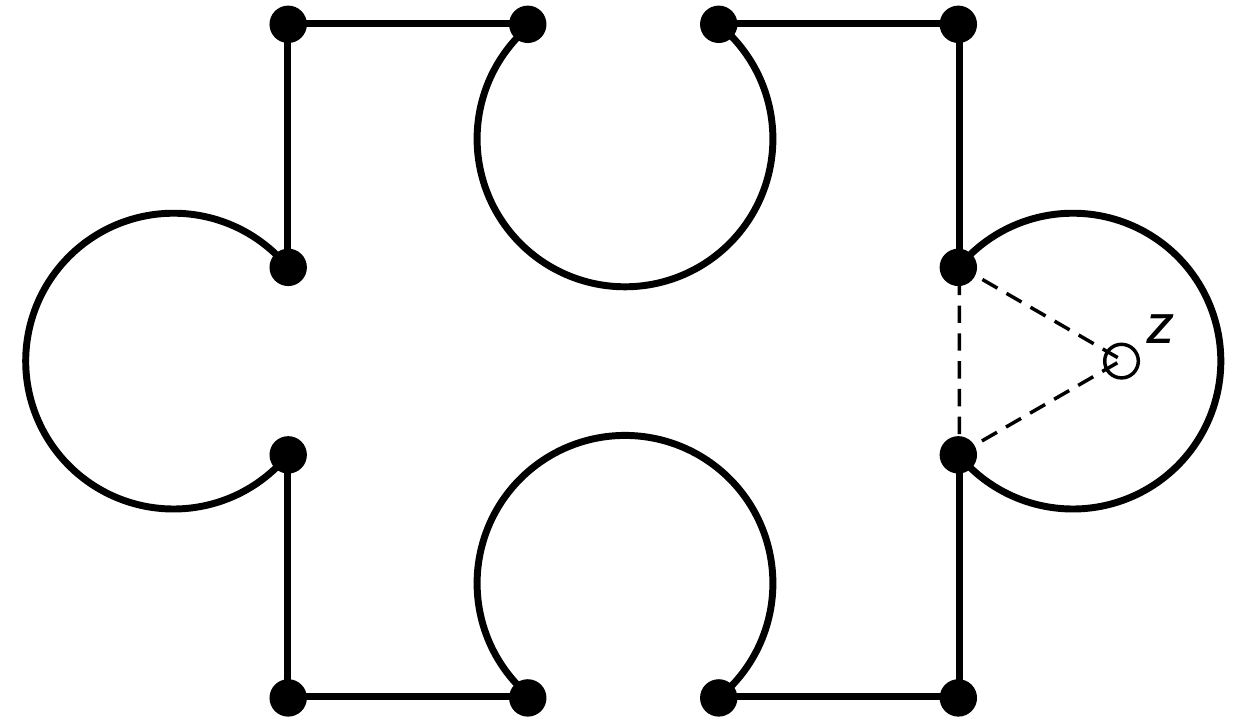}\\[4pt]
                \includegraphics[width=0.30\textwidth]{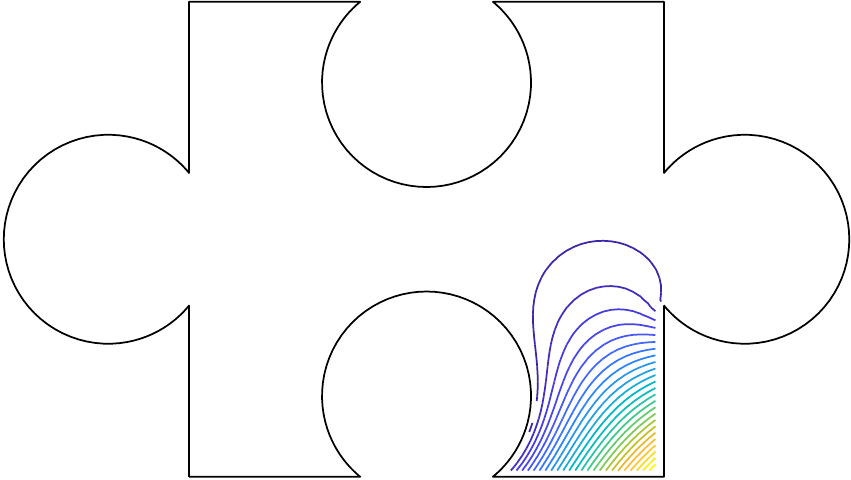}\quad\includegraphics[width=0.30\textwidth]{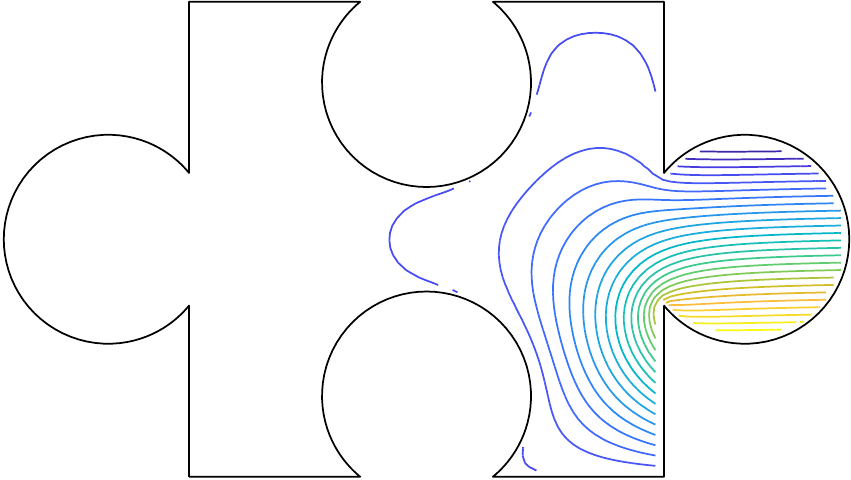}
                \quad\includegraphics[width=0.30\textwidth]{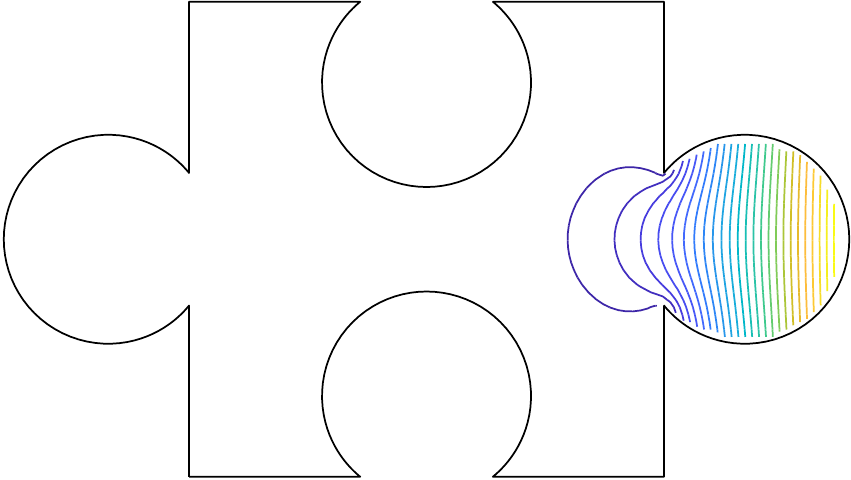}
		\caption{\label{PuzzlePieceFigure} Top Left: The puzzle piece mesh cell considered in
			Example~\ref{PuzzlePieceExample} (and
			Example~\ref{ConstantFunctionExample}).
                        Top Right: The fictitious point $z$ and
                        equilateral triangle used to define $\PP_1(e)$
                        for the circular arc of the right tab; this construction
                        was used to provide boundary values in the
                        for the vertex functions $v_4$ and $v_5$, and
                        the edge function $u_1$.  Bottom Panel:
                        Contour plots of $v_3,v_4,u_1\in
                        V_1^{\partial K}(K)=V_1(K)$, left to right.
                      }
	\end{figure}
      As in Example~\ref{UnitSquareExample}, we let $v_j$ denote a vertex function, 
      i.e. the harmonic function satisfying $v_j(z_i)=\delta_{ij}$ for each of the 12 vertices $z_i$,
      such that, along each edge, the trace of $v_j$ is the trace of a
      linear function.  On straight edges, this corresponds to our
      natural understanding of linear functions of a single variable
      such as arclength, but we must
      clarify what we use for linear functions on curved edges.  Using
      the curved edge $e$ (a circular arc) having $z_4$ and $z_5$ as its endpoints, we
      introduce a fictitious equilateral triangle that includes a
      fictitious point $z$, as seen in
      Figure~\ref{PuzzlePieceFigure}.  The three (linear) barycentric
      coordinates of this triangle are defined in all of $\RR^2$, and
      we take their traces on $e$ to define $\PP_1(e)$.  The traces on
      $e$ of the barycentric coordinates associated with $z_4$ and
      $z_5$ provide part of the Dirichlet data for $v_4$ and $v_5$.
      Analogous constructions are used for the other three curved edges.
      This discussion also makes it clear that, in addition to the 12
      vertex functions in $V_1(K)$, there are four ``edge functions''
      in $V_1(K)$, associated with the four curved edges of $\partial K$, which are harmonic and vanish at every vertex, and
      whose trace on a curved edge is the trace of a linear function
      on that edge, as described above.    We label these edge
      functions $u_0,u_1,u_2,u_3$, where the Dirichlet trace of $u_0$
      is supported on the curved edge of the bottom blank, that of $u_1$ is
      supported on the curved edge of the right tab, that of $u_2$ is
      supported on the curved edge of the top blank, and that of $u_3$ is
      supported on the curved edge of the left tab.  Although we will
      not need to evaluate any of these functions in (the interior of)
      $K$ for our integral computations, we provide contour plots of
      three functions from $V_1(K)$ to further clarify our
      constructions.  We note that $\dim V_1(K)=\dim \PP_1^*(\partial
      K)=16$.  For all curved edges, we choose the equilateral
      triangle so that the fictitious point is directed away from the
      center of $K$.  This has the effect that non-zero Dirichlet
      values of $u_1$ and $u_3$ (associated with the tabs) are
      positive, and the non-zero Dirichlet values of $u_0$ and $u_2$
      (associated with the blanks) are negative; so $u_1,u_3>0$ and
      $u_0,u_2<0$ in the interior of $K$.

      On a straight edge $e$, $\dim\PP_2(e)=3$, but on a circular edge
      $e$, $\dim\PP_2(e)=5$ (cf.~\cite[Proposition 2.1]{Anand2020}).
      From this, it follows that $\dim\PP_2^*(\partial K)=32$.  We
      can augment our basis of $\PP_1^*(\partial K)$ with 16
      additional functions, whose traces on an edge $e$ are traces of 
      quadratic functions $e$, to obtain a basis of $\PP_2^*(\partial K)$.  These, in turn yield functions in
      $V_2^{\partial K}(K)$ by harmonic extension, as before.  We will
      only consider the following 12, which are sufficient for our
      quadrature illustrations: $w_j\in V_2^{\partial K}(K)$ is the
      harmonic function satisfying $w_j=v_jv_{j+1}$ on $\partial K$.
      
      In Table \ref{PuzzlePieceTable}, we use our quadrature method to
      compute the integrals in \eqref{TargetIntegrals} for several
      representative choices of $v,w$ taken from the hierarchical
      basis of $V_2(K)$ we have constructed above.  Apart from the
      known value $\int_K\nabla v\cdot\nabla w\,dx=0$ for
      $v\in V_2^{\partial K}(K)$ and $w\in V_2^{K}(K)$, exact values
      are not available, so we report the computed values to enough
      digits to observe convergence patterns.  Comparing the values
      computed for $n=32$ and $n=64$, we observe that the $L^2$ inner
      products typically differ on the order $10^{-8}$, while
      the $H^1$ semi-inner products typically differ by about
      $10^{-6}$.  These observations are consistent with the absolute
      errors observed in Example \ref{PacManExample}, which similarly
      featured a domain with non-convex corners, giving rise to
      singular functions having unbounded gradients near such corners.
        
\begin{table}[]
		\centering
		\caption{Computed values of the $L^2$ inner product $\int_K vw~dx$ and 
			the $H^1$ semi-inner product $\int_K\nabla v\cdot\nabla w~dx$ on the Puzzle Piece, as described in 
			Example~\ref{PuzzlePieceExample}. Following
                        the notation in that example, $v_j$ denotes
                        the linear vertex functions,
			$u_j$ denotes the linear edge functions,
			$w_j$ denotes the quadratic edge functions, 
			and $\tilde{w}$ denotes the bubble function.}
		\label{PuzzlePieceTable}
		\begin{tabular}{|ccccc|}
			\hline
			$v$		&$w$ 	&$n$ 	&$L^2$ computed	&$H^1$ computed
			\\ \hline
			$v_0$	&$v_0$
			&4 		&1.36722100e-02 	&7.26172648e-01\\
			&&8 	&1.39162733e-02 	&7.24767603e-01\\
			&&16 	&1.39041636e-02 	&7.25599803e-01\\
			&&32 	&1.39043324e-02 	&7.25576824e-01\\
			&&64 	&1.39043346e-02 	&7.25576695e-01\\
			\hline
			$v_0$	&$v_1$
			&4 		&1.05818498e-02 	&-5.48518347e-01\\
			&&8 	&9.16414178e-03 	&-5.66262962e-01\\
			&&16 	&9.17500217e-03 	&-5.66136764e-01\\
			&&32 	&9.17618664e-03 	&-5.66201634e-01\\
			&&64 	&9.17618833e-03 	&-5.66201663e-01\\
			\hline
			$v_0$	&$w_0$
			&4 		&1.81794593e-03 	&1.28954443e-01\\
			&&8 	&2.01809269e-03 	&1.24305855e-01\\
			&&16 	&2.01072985e-03 	&1.24548106e-01\\
			&&32 	&2.01040843e-03 	&1.24569497e-01\\
			&&64 	&2.01040886e-03 	&1.24569472e-01\\
			\hline 
			$v_1$	&$u_0$
			&4 		&-8.30808861e-03 	&-8.59205540e-01\\
			&&8 	&-1.08862664e-02 	&-1.10211908e+00\\
			&&16 	&-1.07049968e-02 	&-1.09593380e+00\\
			&&32 	&-1.07051888e-02 	&-1.09590692e+00\\
			&&64 	&-1.07051900e-02 	&-1.09590691e+00\\
			\hline
			$u_0$	&$u_0$
			&4 		&1.00392937e-01 	&6.09807959e+00\\
			&&8 	&1.27777179e-01 	&7.38466056e+00\\
			&&16 	&1.27439875e-01 	&7.37314091e+00\\
			&&32 	&1.27460415e-01 	&7.37307150e+00\\
			&&64 	&1.27460423e-01 	&7.37307096e+00\\
			\hline
			$u_0$	&$u_4$
			&4 		&-5.39752362e-03 	&1.30869343e-01\\
			&&8 	&-3.92970591e-03 	&9.61489506e-02\\
			&&16 	&-3.91163646e-03 	&9.47669194e-02\\
			&&32 	&-3.92266750e-03 	&9.50284316e-02\\
			&&64 	&-3.92268446e-03 	&9.50288434e-02\\
			\hline 
			$\tilde{w}	$	&$\tilde{w}$
			&4 		&1.98651110e-04 	&1.19301041e-02\\
			&&8 	&1.36096130e-04 	&9.87645211e-03\\
			&&16 	&1.36275143e-04 	&9.85192757e-03\\
			&&32 	&1.36415508e-04 	&9.85631214e-03\\
			&&64 	&1.36415772e-04 	&9.85632205e-03\\
			\hline
			$v_0$	&$\tilde{w}$
			&4 		&4.06902400e-04 	&0\\
			&&8 	&2.35312339e-04 	&0\\
			&&16 	&2.35317337e-04 	&0\\
			&&32 	&2.35506723e-04 	&0\\
			&&64 	&2.35507154e-04 	&0\\
			\hline 
			$u_3$	&$\tilde{w}$
			&4 		&-5.49240260e-04 	&0\\
			&&8 	&-1.07144403e-03 	&0\\
			&&16 	&-1.06639357e-03 	&0\\
			&&32 	&-1.06754253e-03 	&0\\
			&&64 	&-1.06754457e-03 	&0\\
			\hline 
		\end{tabular}
	\end{table}

\end{example}


\section{Additional Details}\label{Details}

In~\cite{Ovall2018} we discuss, in detail, integral equation
techniques to compute interior function values and derivatives, as
well as the normal derivative, of harmonic functions with prescribed
Dirichlet data.  That paper also involves a detailed discussion of
the so-called \textit{Kress quadrature} that underlies much of the
practical computations.  Readers interested in that level of detail
for these aspects of our present work are referred to that paper.
Here, we merely outline key components to make this paper a bit
more self-contained, and to explain the parameters $n$ and $\sigma$
used for the experiments in the Section~\ref{Experiments}.

A fundamental step in the techniques in~\cite{Ovall2018} is the
computation of a harmonic conjugate $\hat\phi$ of a harmonic
function $\phi$ that is given implicitly by its Dirichlet data.  The associated
boundary value problem for $\hat\phi$ is given in~\eqref{HarmConjNeum}, and we
emphasize its general form as a Neumann problem,
\begin{align*}
\Delta\hat\phi=0\mbox{ in
  }K\quad,\quad\frac{\partial\hat\phi}{\partial\mb{n}}=g\mbox{ on
  }\partial K\quad,\quad \int_{\partial K}\hat\phi\,ds=0~.  
\end{align*}
as such problems also arise in the construction described in
Proposition~\ref{Laplacephi}.  Here, the Neumann data $g$ is specified
by the user.  We provide the associated integral equation employed in
our work, whose
solution yields a Neumann-to-Dirichlet map $g\to
\hat\phi\vert_{\partial K}$.  We only present the equation in the case
that $\partial K$ has a single corner $z$.  The case of multiple
corners is dealt with analogously, and further details are provided
in~\cite{Ovall2018}.  The associated integral equation is
\begin{align}\label{IntegralEquation}
  |\partial
  K|\hat\phi(z)+\frac{\hat\phi(x)-\hat\phi(z)}{2}+\int_{\partial
  K}\left(\frac{\partial
  G(x,y)}{\partial\mb{n}(y)}+1\right)(\hat\phi(y)-\hat\phi(z))\,ds(y)=\int_{\partial
  K} G(x,y)g(y)\,ds(y)~,
\end{align}
where $G(x,y)=-(2\pi)^{-1}\ln|x-y|$ is the fundamental solution for
the Laplacian in 2D.  This integral equation is discretized by a
Nystr\"om method, which is a quadrature based method in which the
integrals from~\eqref{IntegralEquation} in the variable $y$ are replaced
by a quadrature that is suitable for these integrands regardless of the
choice of $x\in\partial\Omega$, resulting in a ``semi-discrete''
equation.  The semi-discrete equation is then sampled at the
quadrature points to obtain a fully discrete (and well-conditioned) square
linear system.  We solve this system using GMRES.
In the case that $\hat\phi$ is a harmonic conjugate of the harmonic
function $\phi$ having Dirichlet data $f$, a Dirichlet-to-Neumann map
$f\mapsto\partial\phi/\partial\mb{n}$ is obtained by taking the
tangential derivative of $\hat\phi$,
$\partial\phi/\partial\mb{n}=\partial\hat\phi/\partial\mb{t}$.
In our computations, an FFT is used to efficiently approximate
$\partial\hat\phi/\partial\mb{t}$ at quadrature points from the
approximation of $\hat\phi$ at these points that was obtained by
solving the Nystr\"om linear system.

The fundamental quadrature used in our discretization is due to
Kress~\cite{Kress1990}, and we briefly describe it here to explain the
parameters $n$ and $\sigma$ used for the experiments.  Suppose that
a function $F$ is continuous on $[a,b]$.   For any suitable
change-of-variable $t=\lambda(\tau)$, $\lambda:[a,b]\to[a,b]$, we have
$\int_a^bF(t)\,dt=\int_a^bF(\lambda(\tau))\lambda'(\tau)\,d\tau$.
A clever choice of $\lambda$ ensures that the new integrand,
$F(\lambda(\tau))\lambda'(\tau)$, vanishes at the endpoints $a$ and
$b$, together with some of its derivatives.  It is well-known that 
the trapezoid rule converges rapidly for such integrands,
with the rate of convergence depending on the order at which
$F(\lambda(\tau))\lambda'(\tau)$ vanishes at these endpoints.
\textit{Kress quadrature} is obtained by using the trapezoid rule
after the following sigmoidal change-of-variable, which is determined
by single integer parameter $\sigma\geq 2$,
\begin{align}
  \label{KressCOVa}
  \lambda(\tau)=(b-a)\frac{[c(\tau)]^\sigma}{[c(\tau)]^\sigma+[1-c(\tau)]^\sigma}+a\quad,\quad
  c(\tau)=\left(\frac{1}{2}-\frac{1}{\sigma}\right)\theta^2+\frac{\theta}{\sigma}+\frac{1}{2}~,
\end{align}
where $\theta=(2\tau-a-b)/(b-a)$.  A key property of this
change-of-variable is that $\lambda'(\tau)$ has roots of order
$\sigma-1$ at $\tau=a$ and $\tau=b$.  Letting $h=(b-a)/m$ and
$\tau_k=a+kh$, for $0\leq k\leq m$, the corresponding quadrature is
\begin{align}\label{KressQuad}
  \int_a^bF(t)\,dt\approx\sideset{}{''}\sum_{k=0}^mF(t_k)w_k=\sum_{k=1}^{m-1}F(t_k)w_k
  \quad,\quad t_k=\lambda(\tau_k)\quad,\quad w_k=h\lambda'(\tau_k)~.
\end{align}
The double-prime notation in the first sum above indicates that its
initial and final terms are halved.  Since $w_0=w_{m}=0$, these terms
can be dropped.  Kress provides an analysis of this quadrature based
on the Euler-Maclaurin formula in~\cite{Kress1990}, and we provide a
complementary analysis based on Fourier series in~\cite{Ovall2018}.
Depending on whether both, one or neither of the endpoints are
counted,~\eqref{KressQuad} involves between $m-1$ and $m+1$ quadrature
points.  It is convenient in our applications to consider it as an
$m$-point quadrature by including only one of the endpoints, so that
$m$ points are ``assigned'' to each edge of $\partial K$ via a
parameterization of the boundary, and each vertex of $\partial K$ is
counted only once.  It is also convenient in our implementation to
take $m=2n$.  This choice applies both to the quadrature used for the Nystr\"om
linear system, which yields our discrete Dirichlet-to-Neumann map, and
for the final boundary quadratures that employ this data to compute
the target integrals~\eqref{TargetIntegrals} as described in the
introductory paragraph of Section~\ref{Experiments}.  When the integer
parameter $n$ is used in Section~\ref{Experiments}, this is what it
indicates.

We include an extension of Table~\ref{Palpha} as a ready reference
that may be used for higher degree polynomials.  This is given in
Table~\ref{Palpha2} and covers multiindices $\alpha$ for which
$7\leq|\alpha|\leq 10$.  We also make a brief note on computing the
products of polynomials in our context.  Given
$p,q\in\mathbb{P}_m(\mathbb{R}^2)$, with
\[
	p(x)=\sum_{|\alpha|\leq m}c_\alpha(x-z)^\alpha~,
	\quad
	q(x)=\sum_{|\beta|\leq m}d_\beta(x-z)^\beta~,
\]
the product $pq\in\mathbb{P}_{2m}(\mathbb{R}^2)$ may be written as
\[
	(pq)(x)=\sum_{|\gamma|\leq 2m}\tilde{c}_\gamma(x-z)^\gamma~,
\]
where the coefficients are given by
\[
	\tilde{c}_\gamma=\sum_{|\alpha|+|\beta|=|\gamma|}c_\alpha d_\beta~.
\]
In the case where $p,q$ are sparse, i.e. the majority of their
coefficients are zero, as is the case for our experiments, we execute
a double loop over the nonzero coefficients of $p$ and $q$, and for
each pair $(\alpha,\beta)$, we determine the integer $n$ corresponding
to the multi-index $\alpha+\beta$, according to the bijection
described in \eqref{MultiIndexMaps}.  We then increment the
coefficient $\tilde{c}_\gamma$, with $\gamma$ corresponding to $n$,
with $\tilde{c}_\gamma\leftarrow\tilde{c}_\gamma+c_\alpha d_\beta$.
\begin{table}
  \centering
  \caption{\label{Palpha2} An enumeration of multiindices for
    $7\leq|\alpha|\leq 10$, together with the indices and values of the
    non-zero coefficients of $P_\alpha$ from Proposition~\ref{Laplacep}.}
\begin{tabular}{|cccc|}\hline
  $k$&$\alpha$&$P_\alpha$ indices&$P_\alpha$ coefficients\\\hline
 28&$(7,0)$&$(45, 47, 49, 51, 53)$&$(85, 12, -42, 28, -3)/6144$\\
 29&$(6,1)$&$(46, 48, 50, 52, 54)$&$(247, 84, -126, 36, -1)/14336$\\
 30&$(5,2)$&$(45, 47, 49, 51, 53)$&$(-73, 2628, 1554, -1036, 111)/129024$\\
 31&$(4,3)$&$(46, 48, 50, 52, 54)$&$(-489, 4564, 3906, -1116, 31)/215040$\\
 32&$(3,4)$&$(45, 47, 49, 51, 53)$&$(31, -1116, 3906, 4564, -489)/215040$\\
 33&$(2,5)$&$(46, 48, 50, 52, 54)$&$(111, -1036, 1554, 2628, -73)/129024$\\
 34&$(1,6)$&$(45, 47, 49, 51, 53)$&$(-1, 36, -126, 84, 247)/14336$\\
 35&$(0,7)$&$(46, 48, 50, 52, 54)$&$(-3, 28, -42, 12, 85)/6144$\\\hline
 36&$(8,0)$&$(55, 57, 59, 61, 63, 65)$&$(511, 45, -210, 210, -45, 1)/46080$\\
 37&$(7,1)$&$(56, 58, 60, 62, 64)$&$(251, 60, -126, 60, -5)/18432$\\
 38&$(6,2)$&$(55, 57, 59, 61, 63, 65)$&$(-233, 10485, 4830, -4830, 1035, -23)/645120$\\
 39&$(5,3)$&$(56, 58, 60, 62, 64)$&$(-191, 2292, 1638, -780, 65)/129024$\\
 40&$(4,4)$&$(55, 57, 59, 61, 63, 65)$&$(1, -45, 210, 210, -45, 1)/12600$\\
 41&$(3,5)$&$(56, 58, 60, 62, 64)$&$(65, -780, 1638, 2292, -191)/129024$\\
 42&$(2,6)$&$(55, 57, 59, 61, 63, 65)$&$(-23, 1035, -4830, 4830, 10485, -233)/645120$\\
 43&$(1,7)$&$(56, 58, 60, 62, 64)$&$(-5, 60, -126, 60, 251)/18432$\\
 44&$(0,8)$&$(55, 57, 59, 61, 63, 65)$&$(1, -45, 210, -210, 45, 511)/46080$\\\hline
 45&$(9,0)$&$(66, 68, 70, 72, 74, 76)$&$(93, 5, -30, 42, -15, 1)/10240$\\
 46&$(8,1)$&$(67, 69, 71, 73, 75, 77)$&$(1013, 165, -462, 330, -55, 1)/92160$\\
 47&$(7,2)$&$(66, 68, 70, 72, 74, 76)$&$(-11, 605, 210, -294, 105, -7)/46080$\\
 48&$(6,3)$&$(67, 69, 71, 73, 75, 77)$&$(-53, 795, 462, -330, 55, -1)/53760$\\
 49&$(5,4)$&$(66, 68, 70, 72, 74, 76)$&$(29, -1595, 9570, 8106, -2895, 193)/645120$\\
 50&$(4,5)$&$(67, 69, 71, 73, 75, 77)$&$(193, -2895, 8106, 9570, -1595, 29)/645120$\\
 51&$(3,6)$&$(66, 68, 70, 72, 74, 76)$&$(-1, 55, -330, 462, 795, -53)/53760$\\
 52&$(2,7)$&$(67, 69, 71, 73, 75, 77)$&$(-7, 105, -294, 210, 605, -11)/46080$\\
 53&$(1,8)$&$(66, 68, 70, 72, 74, 76)$&$(1, -55, 330, -462, 165, 1013)/92160$\\
 54&$(0,9)$&$(67, 69, 71, 73, 75, 77)$&$(1, -15, 42, -30, 5,93)/10240$\\\hline
  55&$(10,0)$&$(78, 80, 82, 84, 86, 88, 90)$&$(2047, 66, -495, 924, -495, 66, -1)/270336$\\
  56&$(9,1)$&$(79, 81, 83, 85, 87, 89)$&$(509, 55, -198, 198, -55, 3)/56320$\\
  57&$(8,2)$&$(78, 80, 82, 84, 86, 88, 90)$&$(-1981, 130746, 33165, -61908, 33165, -4422, 67)/12165120$\\
  58&$(7,3)$&$(79, 81, 83, 85, 87, 89)$&$(-681, 12485, 5742, -5742, 1595, -87)/1013760$\\
  59&$(6,4)$&$(78, 80, 82, 84, 86, 88, 90)$&$(743, -49038, 367785, 259644, -139095, 18546, -281)/28385280$\\
  60&$(5,5)$&$(79, 81, 83, 85, 87, 89)$&$(3, -55, 198, 198, -55, 3)/16632$\\
  61&$(4,6)$&$(78, 80, 82, 84, 86, 88, 90)$&$(-281, 18546, -139095, 259644, 367785, -49038, 743)/28385280$\\
  62&$(3,7)$&$(79, 81, 83, 85, 87, 89)$&$(-87, 1595, -5742, 5742, 12485, -681)/1013760$\\
  63&$(2,8)$&$(78, 80, 82, 84, 86, 88, 90)$&$(67, -4422, 33165, -61908, 33165, 130746, -1981)/12165120$\\
  64&$(1,9)$&$(79, 81, 83, 85, 87, 89)$&$(3, -55, 198, -198, 55, 509)/56320$\\
  65&$(0,10)$&$(78, 80, 82, 84, 86, 88, 90)$&$(-1, 66, -495, 924, -495, 66, 2047)/270336$\\ \hline
\end{tabular}
\end{table}

All of the numerical experiments in this paper were implemented in \textsc{Matlab}.
Those interested in obtaining a copy of our code may do so at the GitHub repository found at
\[
\texttt{https://github.com/samreynoldsmath/HigherOrderCurvedElementQuadrature}
\]

\section{Concluding Remarks}

We have described an approach for integrating products of functions
and their gradients over curvilinear polygons, when the functions are
given implicitly in terms of Poisson problems involving polynomial
data.  The efficient and accurate evaluation of such integrals is
instrumental in the formation of linear systems associated with finite
element methods such as Trefftz-FEM and VEM, which employ non-standard
meshes.  In our approach, integrals on curvilinear polygonal cells are
reduced to integrals along their boundaries, using constructions that
yield a function whose Laplacian is a given polynomial or harmonic
function, together with integration by parts.  The data for quadrature
approximations of these boundary integrals is obtained using
Dirichlet-to-Neumann and Neumann-to-Dirichlet maps that are computed
using techniques based on second-kind integral equations.  Numerical
examples demonstrate rapid convergence and high accuracy of these
quadratures.


\def\cprime{$'$}


\end{document}